\newtheorem{Assumption 1}{Assumption}
\newtheorem{Remark 1}{Remark}
\newtheorem{Lemma 1}{Lemma}
\newtheorem{Corollary 1}{Corollary}
\newtheorem{theorem}{Theorem}
\title{\LARGE \bf
Privacy-preserving Decentralized Optimization via Decomposition }
\author{Chunlei Zhang and Huan Gao and Yongqiang Wang% <-this % stops a space
	\thanks{Chunlei Zhang, Huan Gao, and Yongqiang Wang are with the department of Electrical and Computer Engineering, Clemson University, Clemson, SC 29634, USA {\tt\small \{chunlez, hgao2, yongqiw\}@clemson.edu}}%
}
\begin{document}
	\maketitle
	\thispagestyle{empty}
	\pagestyle{empty}
\begin{abstract}
	This paper considers the problem of privacy-preservation in decentralized optimization, in which $N$ agents cooperatively minimize a global objective function that is the sum of  $N$ local objective functions. We assume that each local objective function is private and only known to an individual agent. To cooperatively solve the problem, most existing decentralized optimization approaches require  participating agents to exchange and disclose estimates to neighboring agents. However, this results in leakage of private information about local objective functions, which is undesirable when adversaries exist and try to steal information from participating agents. To address this issue, we propose a privacy-preserving decentralized optimization approach based on proximal Jacobian ADMM via function decomposition.   Numerical simulations confirm the effectiveness of the  proposed approach. 
\end{abstract}			
\section{Introduction}
	We consider the problem of privacy-preservation in decentralized optimization  where $N$ agents cooperatively minimize a global objective function of the following form: 
	\begin{equation}\label{eq:global function}
		\begin{aligned}
		\mathop {\min }\limits_{\tilde{\bm{x}}} \qquad \bar{f}(\tilde{\bm{x}})=\sum\limits_{i=1}^{N}f_i(\tilde{\bm{x}}), 
		\end{aligned}
	\end{equation}
	where variable $\tilde{\bm{x}}\in \mathbb{R}^n$ is common to all agents, function $f_i: \mathbb{R}^n\to\mathbb{R}$ is a private local objective function of agent $i$. 
	This problem has found  wide applications in various domains,  ranging from  rendezvous in multi-agent systems \cite{lin2004multi}, support vector machine  \cite{cortes1995support}  and classification \cite{zhang2017dynamic} in machine learning, source localization in sensor networks \cite{zhang2018distributed}, to data regression in statistics \cite{mateos2010distributed,liu2009large}. 

	To solve the optimization problem \eqref{eq:global function} in an decentralized manner, different algorithms were proposed in recent years, including the distributed (sub)gradient algorithm \cite{nedic2009distributed}, augmented Lagrangian methods (ALM) \cite{he2016proximal}, and the alternating direction method of multipliers (ADMM) as well as its variants \cite{boyd2011distributed,he2016proximal,ling2013decentralized,ling2016weighted}.  Among existing approaches, ADMM has attracted tremendous attention due to its wide applications \cite{boyd2011distributed} and fast convergence rate in both primal and dual iterations \cite{ling2016weighted}. ADMM yields a convergence rate of $O(1/k)$ when the local objective functions $f_i$ are convex and a Q-linear convergence rate when all the local objective functions are strongly convex \cite{shi2014linear}.  In addition, a recent work shows that ADMM can achieve a Q-linear convergence rate even when the local objective functions are only convex (subject to the constraint that the  global objective function is strongly convex) \cite{maros2018q}.
	
	On the other hand, privacy has become one of the key concerns. For example, in source localization, participating agents may want to reach consensus on the source position without revealing their position information \cite{al2017proloc}. In the rendezvous problem, a group of individuals may want to meet at an agreed time and place \cite{lin2004multi} without leaking their initial locations  \cite{mo2017privacy}. In the business sector, independent companies may want to work together to complete a common business for mutual benefit but without sharing their private data \cite{weeraddana2012per}. In the agreement problem \cite{degroot1974reaching}, a group of organizations may want to reach consensus on a subject without leaking their individual opinions to others  \cite{mo2017privacy}. 
	
	One widely used approach to enabling privacy-preservation in decentralized optimization is differential privacy \cite{huang2015differentially,han2017differentially,nozari2016differentially,zhang2018improving} which protects sensitive information by adding carefully-designed noise to exchanged states or objective functions. However, adding noise also compromises the accuracy of optimization results and causes a fundamental trade-off between privacy and accuracy \cite{huang2015differentially,han2017differentially,nozari2016differentially}. In fact, approaches based on differential privacy may fail to converge to the accurate optimization result even without noise perturbation \cite{nozari2016differentially}. It is worth noting that there exists some differential-privacy based optimization approaches  which are able to converge  to the accurate optimization result in the mean-square sense, e.g.  \cite{hale2015differentially,hale2017cloud}. However, those results require the assistance of a third party such as a cloud  \cite{hale2015differentially,hale2017cloud}, and therefore cannot be applied to the completely decentralized setting  where no third parties exist. Encryption-based approaches are also commonly used to enable privacy-preservation \cite{xu2015secure,yao1982protocols,zhang2018admm}. However, such approaches unavoidably bring about extra computational and communication burden for real-time optimization  \cite{lagendijk2013encrypted}. Another approach to enabling privacy preservation in linear multi-agent networks  is observability-based design \cite{pequito2014design,alaeddini2017adaptive}, which protects agents' information from non-neighboring agents through properly designing the weights of the communication graph. However, this approach cannot protect the privacy of adversary's direct neighbors. 
	
	To enable privacy in decentralized optimization without incurring large communication/computational overhead or compromising algorithmic accuracy, we propose a novel privacy solution through function decomposition. 
	In the optimization literature, privacy has been defined as  preserving the confidentiality of agents' states  \cite{hale2015differentially},  (sub)gradients or objective functions \cite{nozari2016differentially,yan2013distributed,lou2017privacy}.  In this paper, we define privacy as the non-disclosure of agents' (sub)gradients.  
	We protect agents' (sub)gradients because in many decentralized optimization applications, subgradients contain sensitive information such as salary or medical record \cite{zhang2018admm,yan2013distributed}. 

	\emph{Contributions:} We proposed a privacy-preserving decentralized optimization approach through function decomposition.  In contrast to differential-privacy based approaches which use noise to cover sensitive information and are subject to a fundamental  trade-off between privacy and accuracy, our approach can enable privacy preservation without sacrificing accuracy. Compared with encryption-based approaches which suffer from heavy computational and communication burden, our approach incurs little extra computational  and communication overhead.

	\emph{Organization:} The rest of this paper is organized as follows: Sec. \uppercase\expandafter{\romannumeral2} introduces the attack model and presents the proximal Jacobian ADMM solution to \eqref{eq:global function}. Then  a completely decentralized   privacy-preserving approach to problem \eqref{eq:global function} is proposed in Sec. \uppercase\expandafter{\romannumeral3}. Rigorous analysis of the guaranteed privacy and convergence is addressed in Sec. \uppercase\expandafter{\romannumeral4} and Sec. \uppercase\expandafter{\romannumeral5}, respectively.  Numerical simulation results are provided in Sec. \uppercase\expandafter{\romannumeral6} to confirm the effectiveness and computational efficiency of the proposed approach. In the end, we draw conclusions in Sec. \uppercase\expandafter{\romannumeral7}.

\section{Background}
	We first introduce the attack model considered in this paper.  Then we  present the proximal Jacobian  ADMM algorithm for decentralized optimization. 

	\subsection{Attack Model}
		We consider two types of adversaries in this paper:	{\it Honest-but-curious adversaries} and  {\it External eavesdroppers}. {\it Honest-but-curious adversaries}  are agents who follow all protocol steps correctly but are curious and collect all intermediate and input/output data in an attempt to learn some information about other participating agents \cite{li2010secure}. {\it External eavesdroppers} are adversaries who steal information through wiretapping all communication channels and intercepting messages exchanged between agents.

	\subsection{Proximal Jacobian ADMM}
		The decentralized problem \eqref{eq:global function} can be formulated as follows: each $f_i$ in \eqref{eq:global function} is private and only known to agent $i$, and all $N$ agents form a bidirectional connected network, which is denoted by a graph $G=(V,E)$. $V$ denotes the set of agents, $E$ denotes the set of communication links (undirected edges) between agents, and $|E|$ denotes the number of communication links (undirected edges) in $E$.  If there exists a communication link between agents $i$ and $j$, we say that agent $i$ and agent $j$ are neighbors and the  link is denoted as
		$e_{i,j}\in E$ if $i<j$ is true or $e_{j,i}\in E$ otherwise.  Moreover, the set of all neighboring agents of $i$ is denoted as $\mathcal{N}_i$ and the number of  agents in $\mathcal{N}_i$ is denoted as $D_i$.  Then problem \eqref{eq:global function} can be rewritten as 		
		\begin{equation}\label{eq:conventional admm form}
			\begin{aligned}
				&\mathop {\min }\limits_{\bm{x}_{i}\in \mathbb{R}^n,\,i\in\{1,2,\ldots,N\}} \qquad \sum\limits_{i=1}^{N}f_i(\bm{x}_{i})  \\
				&\textrm{subject to} \qquad \bm{x}_{i}=\bm{x}_{j}, \quad \forall e_{i,j}\in E,  
			\end{aligned}
		\end{equation}
		where $\bm{x}_i$ is a copy of $\tilde{\bm{x}}$ and belongs to agent $i$. Using the following proximal Jacobian ADMM \cite{deng2017parallel}, an optimal solution to \eqref{eq:global function} can be achieved at each agent:
		
		\begin{numcases}{}
			\bm{x}_i^{k+1}=\underset{\bm{x}_i}{\operatorname{argmin}} f_i(\bm{x}_i) +\frac{\gamma_i\rho}{2}\parallel \bm{x}_i-\bm{x}_i^k \parallel^2\nonumber \\
			\qquad\qquad+\sum\limits_{j\in \mathcal{N}_i}(\bm{\lambda}_{i,j}^{kT}(\bm{x}_i-\bm{x}_{j}^k)+\frac{\rho}{2}\parallel\bm{x}_i-\bm{x}_{j}^k \parallel ^2), \label{eq:conventional admm 1}\\
			\bm{\lambda}_{i,j}^{k+1}=\bm{\lambda}_{i,j}^k+\rho(\bm{x}_i^{k+1}-\bm{x}_j^{k+1}), \quad \forall  j\in\mathcal{N}_i. \label{eq:conventional admm 2}
		\end{numcases}
		Here, $k$ is the iteration index,  $\gamma_i>0$ $(i=1,2,\ldots,N)$ are proximal coefficients, $\rho$ is the penalty parameter, which is a positive constant scalar. $\bm{\lambda}_{i,j}$ and $\bm{\lambda}_{j,i}$ are Lagrange multipliers corresponding to the constraint $\bm{x}_{i}=\bm{x}_{j}, e_{i,j}\in E$.  It is worth noting that similar to our prior work \cite{zhang2018admm}, both $\bm{\lambda}_{i,j}$ and $\bm{\lambda}_{j,i}$ are introduced for the constraint $\bm{x}_{i}=\bm{x}_{j}, e_{i,j}\in E$ in \eqref{eq:conventional admm 1}-\eqref{eq:conventional admm 2} to unify the algorithm description.  By setting $\bm{\lambda}_{i,j}^{0}=\rho(\bm{x}_i^{0}-\bm{x}_j^{0})$ at $t=0$, we have $\bm{\lambda}_{i,j}^k=-\bm{\lambda}_{j,i}^k$ holding for all $i=1,2,\cdots, N, j\in\mathcal{N}_i$. In this way, the update rule of agent $i$ can be unified without separating $i>j$ and $i<j$ for $j\in\mathcal{N}_i$, as shown in \eqref{eq:conventional admm 1}.
			
		The proximal Jacobian  ADMM is effective  in solving \eqref{eq:global function}. However, it cannot protect the privacy of participating agents' gradients as states $\bm{x}_i^k$ are exchanged and disclosed explicitly among neighboring agents. So adversaries can easily derive $\triangledown f_i(\bm{x}_{i}^{k})$  explicitly for $k=1,2,\ldots$ according to the update rules in \eqref{eq:conventional admm 1} and \eqref{eq:conventional admm 2} by leveraging the knowledge of $\gamma_i$.

\section{Privacy-preserving Decentralized Optimization}
	The key idea of our approach to enabling privacy-preservation is to randomly decompose each $f_i$ into two parts $f_i^{\alpha k}$ and $f_i^{\beta k}$ under the constraint $f_i =f_i^{\alpha k}+ f_i^{\beta k}$. The index $k$ of functions $f_i^{\alpha k}$ and $f_i^{\beta k}$   indicates that functions $f_i^{\alpha k}$ and $f_i^{\beta k}$  can be time-varying. However, it should be noticed  that the sum of $f_i^{\alpha k}$ and $f_i^{\beta k}$ is time invariant and always equals to $f_i$.  We let the function  $f_i^{\alpha k}$  succeed the role of the original function $f_i$ in inter-agent interactions while the other function $f_i^{\beta k}$  involves only by interacting with  $f_i^{\alpha k}$, as shown in Fig. \ref{fig:G}. 
		
	\begin{figure}
		\begin{center}
			\subfloat[]{\includegraphics[width=0.21\textwidth]{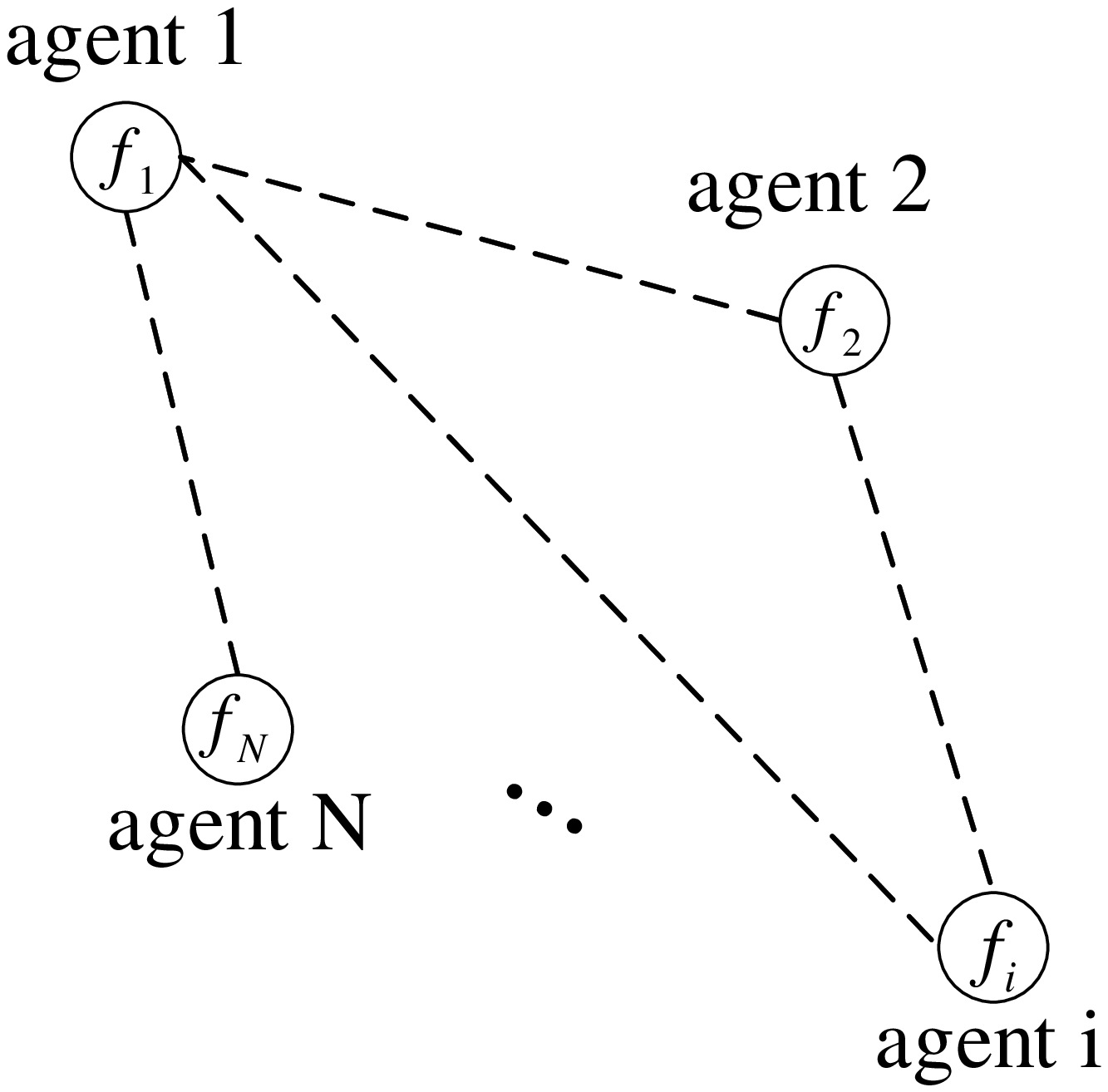}%
				\label{fig: before decomposition}}
			\hspace{.3in} 
			\subfloat[]{\includegraphics[width=0.22\textwidth]{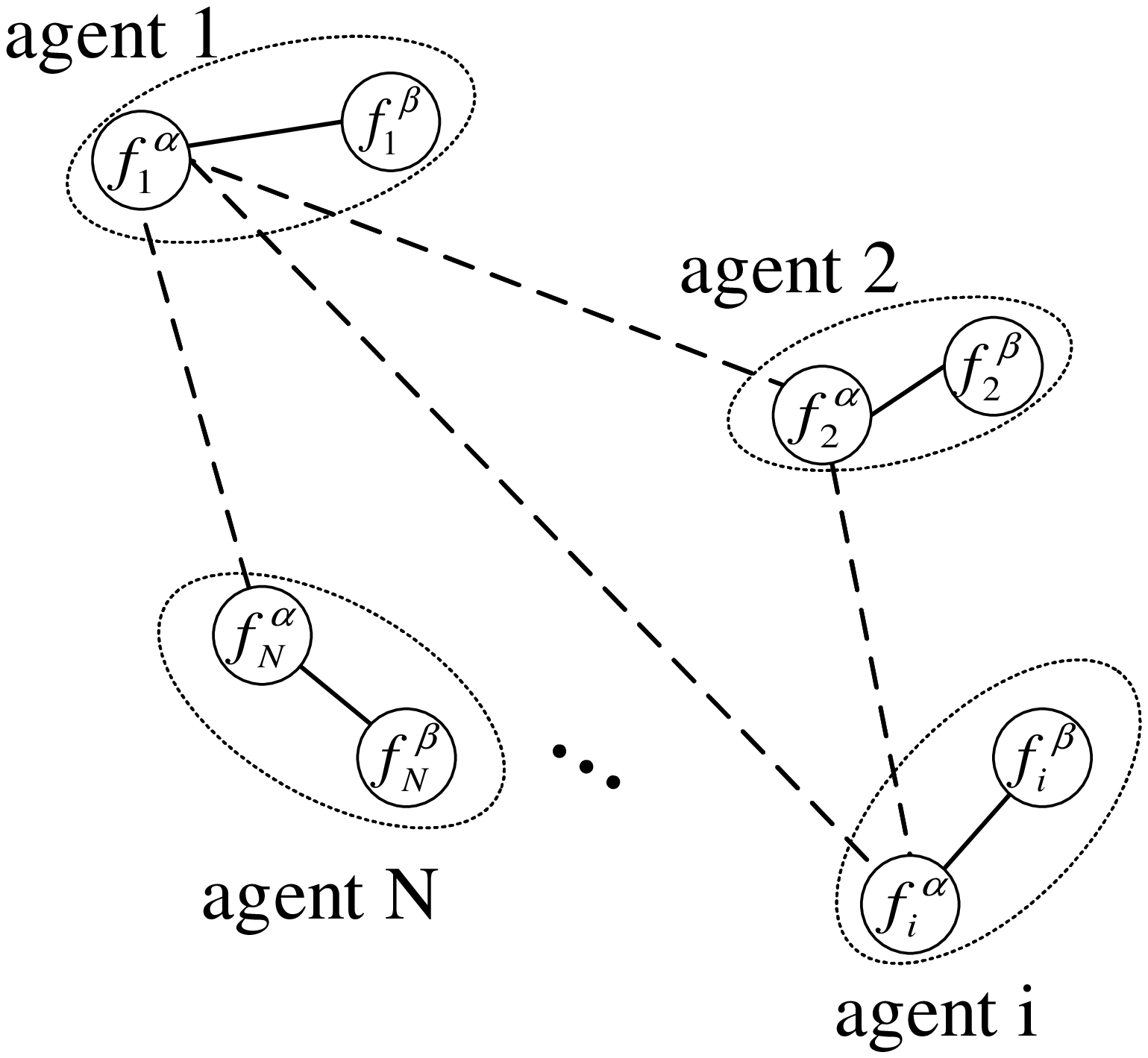}%
				\label{fig: after decomposition}}
		\end{center}
		\caption{Function-decomposition based privacy-preserving decentralized optimization. (a) Before function decomposition (b) after function decomposition.}
		\label{fig:G}
	\end{figure}
	
	After the function decomposition, problem \eqref{eq:global function} can be rewritten as 
	
	\begin{equation}\label{eq:decomposition admm form}
		\begin{aligned}
			&\mathop {\min }\limits_{\bm{x}_{i}^{\alpha},\bm{x}_{i}^{\beta} \in\mathbb{R}^n,\,i\in\{1,2,\ldots,N\}} \qquad \sum\limits_{i=1}^{N}(f_i^{\alpha k}(\bm{x}_{i}^{\alpha})+f_i^{\beta k}(\bm{x}_{i}^{\beta})) \\
			&\qquad\quad\textrm{subject to} \qquad\quad \bm{x}_{i}^{\alpha}=\bm{x}_{j}^{\alpha}, \quad \forall e_{i,j}\in E,  \\
			&\qquad\qquad\qquad\qquad\qquad \bm{x}_{i}^{\alpha}=\bm{x}_{i}^{\beta}, \quad \forall i\in V, 
		\end{aligned}
	\end{equation}
	and the associated augmented Lagrangian function is
	\begin{equation} \label{eq:decompositionaugemented lagrangian function}
		\begin{aligned}
			\lefteqn{\mathcal{L}_\rho^k(\bm{x},\bm{\lambda})=\sum\limits_{i=1}^{N}(f_i^{\alpha k}(\bm{x}_{i}^{\alpha}) +f_i^{\beta k}(\bm{x}_{i}^{\beta}) )}\\
			& \qquad+ \sum\limits_{e_{i,j}\in E}(\bm{\lambda}_{i,j}^{\alpha T}(\bm{x}_i^{\alpha}-\bm{x}_{j}^{\alpha})+\frac{\rho}{2}\parallel\bm{x}_i^{\alpha}-\bm{x}_{j}^{\alpha} \parallel ^2)\\
			& \qquad+ \sum\limits_{i\in V}(\bm{\lambda}_{i,i}^{\alpha\beta T}(\bm{x}_i^{\alpha}-\bm{x}_{i}^{\beta})+\frac{\rho}{2}\parallel\bm{x}_i^{\alpha}-\bm{x}_{i}^{\beta} \parallel ^2). 
		\end{aligned}
	\end{equation}
	where $\bm{x}=[\bm{x}_1^{\alpha T}, \bm{x}_1^{\beta T}, \bm{x}_2^{\alpha T}, \bm{x}_2^{\beta T}, \ldots ,\bm{x}_N^{\alpha T},\bm{x}_N^{\beta T}]^T\in\mathbb{R}^{2Nn}$ is the augmented state. $\bm{\lambda}_{i,j}^{\alpha}$ is the Lagrange multiplier corresponding to the constraint $\bm{x}_{i}^{\alpha}=\bm{x}_{j}^{\alpha}$,   $\bm{\lambda}_{i,i}^{\alpha\beta}$ is the Lagrange multiplier corresponding to the constraint $\bm{x}_{i}^{\alpha}=\bm{x}_{i}^{\beta}$, and all $\bm{\lambda}_{i,j}^{\alpha}$ and  $\bm{\lambda}_{i,i}^{\alpha\beta}$ are stacked into $\bm{\lambda}$. $\rho$ is the penalty parameter, which is a positive constant scalar. It is worth noting that each agent $i$ does not need  to know the associated augmented Lagrangian function  (i.e., other agents' objective functions) to update its states $\bm{x}_i^{\alpha}$ and  $\bm{x}_i^{\beta}$, as shown below  in \eqref{eq:decomposition admm 1} and \eqref{eq:decomposition admm 2}.
	
	Based on Jacobian update,  we can solve \eqref{eq:decomposition admm form} by applying the following iterations for $i=1,2,\ldots,N$:
	\begin{numcases}{}
		\bm{x}_i^{\alpha (k+1)}=\underset{\bm{x}_i^{\alpha}}{\operatorname{argmin}}\frac{\gamma_i^{\alpha}\rho}{2}\parallel \bm{x}_i^{\alpha}-\bm{x}_i^{\alpha k} \parallel^2 \nonumber \\
		\qquad\qquad+\mathcal{L}_\rho^{k+1}(\bm{x}_1^{\alpha k},\bm{x}_1^{\beta k},\ldots,\bm{x}_i^{\alpha},\bm{x}_{i}^{\beta k},\ldots,\bm{x}_N^{\alpha k},\bm{x}_N^{\beta k},\bm{\lambda}^k) \nonumber \\
		\qquad\qquad=\underset{\bm{x}_i^{\alpha}}{\operatorname{argmin}} f_i^{\alpha (k+1)}(\bm{x}_i^{\alpha})+\frac{\gamma_i^{\alpha}\rho}{2}\parallel \bm{x}_i^{\alpha}-\bm{x}_i^{\alpha k}\parallel^2 \nonumber\\
		\qquad\qquad+\sum\limits_{j\in \mathcal{N}_{i}}(\bm{\lambda}_{i,j}^{\alpha k T}(\bm{x}_i^{\alpha}-\bm{x}_j^{\alpha k})+\frac{\rho}{2}\parallel\bm{x}_i^{\alpha}-\bm{x}_j^{\alpha k}\parallel^2) \nonumber \\
		\qquad\qquad+\bm{\lambda}_{i,i}^{\alpha\beta kT}(\bm{x}_i^{\alpha}-\bm{x}_i^{\beta k})+\frac{\rho}{2}\parallel\bm{x}_i^{\alpha}-\bm{x}_i^{\beta k}\parallel^2, \label{eq:decomposition admm 1}\\
		\bm{x}_i^{\beta (k+1)}=\underset{\bm{x}_i^{\beta}}{\operatorname{argmin}}\frac{\gamma_i^{\beta}\rho}{2}\parallel \bm{x}_i^{\beta}-\bm{x}_i^{\beta k} \parallel^2 \nonumber \\
		\qquad\qquad+\mathcal{L}_\rho^{k+1}(\bm{x}_1^{\alpha k},\bm{x}_1^{\beta k},\ldots,\bm{x}_i^{\alpha k},\bm{x}_{i}^{\beta},\ldots,\bm{x}_N^{\alpha k},\bm{x}_N^{\beta k},\bm{\lambda}^k) \nonumber \\
		\qquad\qquad= \underset{\bm{x}_i^{\beta}}{\operatorname{argmin}} f_i^{\beta(k+1)}(\bm{x}_i^{\beta})+\frac{\gamma_i^{\beta}\rho}{2}\parallel \bm{x}_i^{\beta}-\bm{x}_i^{\beta k}\parallel^2 \nonumber\\
		\qquad\qquad+\bm{\lambda}_{i,i}^{\beta\alpha kT}(\bm{x}_i^{\beta}-\bm{x}_i^{\alpha k})+\frac{\rho}{2}\parallel\bm{x}_i^{\beta}-\bm{x}_i^{\alpha k}\parallel^2, \label{eq:decomposition admm 2}\\
		\bm{\lambda}_{i,j}^{\alpha(k+1)}=\bm{\lambda}_{i,j}^{\alpha k}+\rho(\bm{x}_i^{\alpha(k+1)}-\bm{x}_j^{\alpha(k+1)}), \quad\forall j\in\mathcal{N}_{i}\label{eq:decomposition admm 3}\\
		\bm{\lambda}_{i,i}^{\alpha\beta(k+1)}=\bm{\lambda}_{i,i}^{\alpha\beta k}+\rho(\bm{x}_i^{\alpha(k+1)}-\bm{x}_i^{\beta(k+1)}), \label{eq:decomposition admm 4}\\
		\bm{\lambda}_{i,i}^{\beta\alpha(k+1)}=\bm{\lambda}_{i,i}^{\beta\alpha k}+\rho(\bm{x}_i^{\beta(k+1)}-\bm{x}_i^{\alpha(k+1)}). \label{eq:decomposition admm 5} 
	\end{numcases}
	Here, similar to our prior work \cite{zhang2018admm} and  algorithm \eqref{eq:conventional admm 1}-\eqref{eq:conventional admm 2}, both $\bm{\lambda}_{i,j}^{\alpha}$ and $\bm{\lambda}_{j,i}^{\alpha}$ are introduced for  the constraint $\bm{x}_{i}^{\alpha}=\bm{x}_{j}^{\alpha}, e_{i,j}\in E$  in \eqref{eq:decomposition admm 1}-\eqref{eq:decomposition admm 5} to unify the algorithm description.   Similarly, both $\bm{\lambda}_{i,i}^{\alpha\beta}$ and $\bm{\lambda}_{i,i}^{\beta\alpha}$ are introduced for the constraint $\bm{x}_{i}^{\alpha}=\bm{x}_{i}^{\beta}$ in \eqref{eq:decomposition admm 1}-\eqref{eq:decomposition admm 5} to unify the algorithm description.  
	
	Next we give in detail our privacy-preserving function-decomposition based algorithm.
	
	\begin{flushleft}
		\vspace{-0.85cm}
		\rule{0.49\textwidth}{0.2pt}
	\end{flushleft}
	\vspace{-0.2cm}
	
	\textbf{Algorithm \uppercase\expandafter{\romannumeral1}}
	\begin{flushleft}
		\vspace{-0.85cm}
		\rule{0.49\textwidth}{0.2pt}
	\end{flushleft}
	
	\textbf{Initial Setup:}
	For all $i=1,2,\ldots,N$, agent $i$ initializes $\bm{x}_i^{\alpha 0}$ and $\bm{x}_i^{\beta 0}$, and exchanges $\bm{x}_i^{\alpha 0}$ with neighboring agents. Then agent $i$  sets $\bm{\lambda}_{i,j}^{\alpha  0}=\bm{x}_i^{\alpha 0}-\bm{x}_j^{\alpha 0}$, $\bm{\lambda}_{i,i}^{\alpha\beta 0}=\bm{x}_i^{\alpha 0}-\bm{x}_i^{\beta 0}$, and $\bm{\lambda}_{i,i}^{\beta\alpha 0}=\bm{x}_i^{\beta 0} -\bm{x}_i^{\alpha 0}$.
	
	\textbf{Input:} $\bm{x}_i^{\alpha k}$, $\bm{\lambda}_{i,j}^{\alpha k}$, $\bm{\lambda}_{i,i}^{\alpha\beta k}$, $\bm{x}_i^{\beta k}$, $\bm{\lambda}_{i,i}^{\beta\alpha k}$.
	
	\textbf{Output:} $\bm{x}_i^{\alpha (k+1)}$, $\bm{\lambda}_{i,j}^{\alpha (k+1)}$, $\bm{\lambda}_{i,i}^{\alpha\beta (k+1)}$, $\bm{x}_i^{\beta (k+1)}$, $\bm{\lambda}_{i,i}^{\beta\alpha (k+1)}$.
	\begin{enumerate}	
		\item For all $i=1,2,\ldots,N$, agent $i$ constructs $f_i^{\alpha (k+1)}$ and $f_i^{\beta (k+1)}$ under the constraint $f_i =f_i^{\alpha (k+1)}+ f_i^{\beta (k+1)}$;
		\item For all $i=1,2,\ldots,N$, agent  $i$  updates $\bm{x}_i^{\alpha (k+1)}$ and  $\bm{x}_i^{\beta (k+1)}$ according to the update rules in \eqref{eq:decomposition admm 1}  and \eqref{eq:decomposition admm 2}, respectively;
		\item For all $i=1,2,\ldots,N$, agent  $i$  sends $\bm{x}_i^{\alpha (k+1)}$ to neighboring agents;
		\item For all $i=1,2,\ldots,N$, agent  $i$ computes $\bm{\lambda}_{i,j}^{\alpha (k+1)}$, $\bm{\lambda}_{i,i}^{\alpha\beta (k+1)}$ and $\bm{\lambda}_{i,i}^{\beta\alpha (k+1)}$  according to \eqref{eq:decomposition admm 3}-\eqref{eq:decomposition admm 5};
		\item Set $k$ to $k+1$, and go to 1).
	\end{enumerate}
	
	\begin{flushleft}
		\vspace{-0.9cm}
		\rule{0.49\textwidth}{0.2pt}
	\end{flushleft}

\section{Privacy Analysis}
In this section, we rigorously prove that each agent's  gradient of local objective function $\triangledown f_j$ cannot be inferred by honest-but-curious  adversaries and external eavesdroppers.

\begin{theorem} \label{theorem:privacy}
	In Algorithm \uppercase\expandafter{\romannumeral1}, agent $j$'s gradient of local objective function $\triangledown f_j$ at any point except the optimal solution will not be revealed to an honest-but-curious agent $i$.
\end{theorem}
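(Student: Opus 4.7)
The plan is to trace exactly what an honest-but-curious agent $i\in\mathcal{N}_j$ can and cannot infer after running Algorithm I, and then exhibit a degree of freedom in the observations that keeps $\nabla f_j$ indeterminate. First I would enumerate the information available to agent $i$: by the algorithm description, $i$ receives $\bm{x}_j^{\alpha k}$ in every round (together with $\bm{x}_l^{\alpha k}$ for its own neighbors $l$), knows the topology $G$, the structure of the updates \eqref{eq:decomposition admm 1}--\eqref{eq:decomposition admm 5}, and its own states/multipliers. Crucially, $i$ does not observe $\bm{x}_j^{\beta k}$, the multipliers $\bm{\lambda}_{j,j}^{\alpha\beta k}$ and $\bm{\lambda}_{j,j}^{\beta\alpha k}$, the multipliers $\bm{\lambda}_{j,l}^{\alpha k}$ for $l\neq i$, the proximal weights $\gamma_j^{\alpha},\gamma_j^{\beta}$, and, most importantly, the decomposition pair $(f_j^{\alpha k},f_j^{\beta k})$ that is re-drawn at every iteration under the sole constraint $f_j^{\alpha k}+f_j^{\beta k}=f_j$.

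Next I would write the first-order optimality conditions for the subproblems \eqref{eq:decomposition admm 1} and \eqref{eq:decomposition admm 2} evaluated at agent $j$, obtaining
\begin{equation*}
\nabla f_j^{\alpha(k+1)}(\bm{x}_j^{\alpha(k+1)})=-\gamma_j^{\alpha}\rho(\bm{x}_j^{\alpha(k+1)}-\bm{x}_j^{\alpha k})-\sum_{l\in\mathcal{N}_j}\!\bigl(\bm{\lambda}_{j,l}^{\alpha k}+\rho(\bm{x}_j^{\alpha(k+1)}-\bm{x}_l^{\alpha k})\bigr)-\bm{\lambda}_{j,j}^{\alpha\beta k}-\rho(\bm{x}_j^{\alpha(k+1)}-\bm{x}_j^{\beta k}),
\end{equation*}
and an analogous identity for $\nabla f_j^{\beta(k+1)}(\bm{x}_j^{\beta(k+1)})$. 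Since $\nabla f_j(\bm{x})=\nabla f_j^{\alpha(k+1)}(\bm{x})+\nabla f_j^{\beta(k+1)}(\bm{x})$ by construction, recovering $\nabla f_j$ at any queried point would require agent $i$ to pin down both components. I would then point out that the $\beta$-equation involves only quantities ($\bm{x}_j^{\beta k},\bm{x}_j^{\beta(k+1)},\bm{\lambda}_{j,j}^{\beta\alpha k},\gamma_j^{\beta}$) that are strictly internal to agent $j$, so $\nabla f_j^{\beta(k+1)}$ is unobservable; and even the $\alpha$-equation contains the hidden terms $\bm{x}_j^{\beta k}$, $\bm{\lambda}_{j,j}^{\alpha\beta k}$, $\gamma_j^{\alpha}$, and the multipliers $\bm{\lambda}_{j,l}^{\alpha k}$ for $l\in\mathcal{N}_j\setminus(\mathcal{N}_i\cup\{i\})$.

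The decisive step, which I anticipate as the main obstacle, is to turn this lack of direct observability into a rigorous non-identifiability statement: I must show that the adversary's joint observation sequence is consistent with a continuum of candidate tuples $(f_j^{\alpha k},f_j^{\beta k},\bm{x}_j^{\beta k},\bm{\lambda}_{j,j}^{\alpha\beta k},\bm{\lambda}_{j,j}^{\beta\alpha k},\gamma_j^{\alpha},\gamma_j^{\beta})$ that produce different $\nabla f_j$ at any chosen non-optimal query point. The plan is constructive: fix the observed trajectory $\{\bm{x}_j^{\alpha k}\}$; pick any smooth alternative $\tilde f_j\neq f_j$ with $\nabla\tilde f_j(\bm{x})\neq\nabla f_j(\bm{x})$ at the query point $\bm{x}$ but with the same optimum; choose an arbitrary smooth splitting $\tilde f_j=\tilde f_j^{\alpha k}+\tilde f_j^{\beta k}$, an arbitrary shadow trajectory $\tilde{\bm{x}}_j^{\beta k}$, and then solve the $\alpha$- and $\beta$-KKT equations backward for the hidden multipliers $\tilde{\bm{\lambda}}_{j,j}^{\alpha\beta k},\tilde{\bm{\lambda}}_{j,j}^{\beta\alpha k}$ and (if desired) the hidden neighbor multipliers, so that the observed $\bm{x}_j^{\alpha k}$ still satisfies every observable update rule. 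Because these hidden quantities have enough free dimensions to absorb the discrepancy at every $k$, the alternative is indistinguishable from the ground truth in agent $i$'s view, proving non-disclosure of $\nabla f_j$ at any non-optimal point. I would finally remark that the optimum is excepted because at $\bm{x}_j^{\alpha}=\bm{x}_j^{\beta}=\tilde{\bm{x}}^{*}$ the KKT condition of \eqref{eq:global function} forces $\nabla f_j$ to satisfy consensus relations that collapse the above degrees of freedom.
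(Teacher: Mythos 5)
Your proposal reaches the right conclusion but by a genuinely different route from the paper. The paper's argument is a pure counting argument: the adversary can write down the $2nK$ first-order optimality conditions for the $\alpha$- and $\beta$-updates over $K$ iterations, but these involve at least $3nK+n+2$ unknowns ($\triangledown f_j^{\alpha k}(\bm{x}_j^{\alpha k})$, $\triangledown f_j^{\beta k}(\bm{x}_j^{\beta k})$, $\bm{x}_j^{\beta k}$, $\gamma_j^{\alpha}$, $\gamma_j^{\beta}$), so the system is underdetermined; the exception at the optimum is handled exactly as you do, via the KKT identity $\triangledown f_j(\bm{x}_j^*)=-\sum_{m\in\mathcal{N}_j}\bm{\lambda}_{j,m}^{\alpha*}$. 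Your indistinguishability argument --- exhibiting a continuum of alternative tuples consistent with the same observations but yielding different gradients at the query point --- is in principle \emph{stronger}: an underdetermined system can still uniquely determine particular linear combinations of its unknowns, so counting alone does not by itself rule out recovery of $\triangledown f_j=\triangledown f_j^{\alpha k}+\triangledown f_j^{\beta k}$ at a point; your construction, if completed, would close that logical gap. What counting buys is brevity; what your construction buys is an actual proof of non-identifiability.

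The one place your plan is loose is the ``solve the KKT equations backward for the hidden multipliers'' step. The quantities $\bm{\lambda}_{j,j}^{\alpha\beta k}$, $\bm{\lambda}_{j,j}^{\beta\alpha k}$ and $\bm{x}_j^{\beta k}$ are not free parameters at each iteration: the multipliers obey the deterministic recursions \eqref{eq:decomposition admm 4}--\eqref{eq:decomposition admm 5} from fixed initializations, and $\bm{x}_j^{\beta(k+1)}$ must itself be the argmin in \eqref{eq:decomposition admm 2} for the chosen $\tilde f_j^{\beta(k+1)}$. So you cannot independently absorb the discrepancy at every $k$ by adjusting multipliers; the genuine degrees of freedom are the per-iteration decomposition $\tilde f_j^{\alpha k}$ (e.g.\ the free linear term $\bm{b}_j^k$ in the paper's quadratic example), the initialization $\bm{x}_j^{\beta 0}$, and $\gamma_j^{\alpha},\gamma_j^{\beta}$, and the construction must be done by choosing these inputs and propagating the recursions \emph{forward} while matching the observed $\{\bm{x}_j^{\alpha k}\}$. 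This is fixable --- the $n$-dimensional freedom in $\tilde f_j^{\alpha(k+1)}$ at each step suffices to shift $\triangledown\tilde f_j^{\alpha(k+1)}(\bm{x}_j^{\alpha(k+1)})$ arbitrarily --- but as written the argument overstates which quantities the adversary's model leaves unconstrained.
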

\begin{proof}
	Suppose that an honest-but-curious adversary agent $i$ collects information from $K$ iterations to infer the gradient $\triangledown f_j$ of a neighboring agent $j$. The adversary agent $i$ can establish $2nK$ equations relevant to $\triangledown f_j$ by making use of the fact that the update rules of \eqref{eq:decomposition admm 1} and \eqref{eq:decomposition admm 2}   are publicly known, i.e.,
	\begin{equation}
	\left\{
	\begin{aligned}
	\lefteqn{ \triangledown f_j^{\alpha 1}(\bm{x}_{j}^{\alpha 1})+(\gamma_j^{\alpha}+D_{j}+1)\rho\bm{x}_j^{\alpha 1}-\gamma_j^{\alpha}\rho\bm{x}_j^{\alpha 0}}\\
	&\qquad\qquad\quad+\sum\limits_{m\in \mathcal{N}_{j}}(\bm{\lambda}_{j,m}^{\alpha 0}-\rho\bm{x}_m^{\alpha 0})
	+\bm{\lambda}_{j,j}^{\alpha\beta 0}-\rho\bm{x}_j^{\beta 0}=\bm{0}, \label{eq:j} \\		
	\lefteqn{ \triangledown f_j^{\beta 1}(\bm{x}_{j}^{\beta 1})+(\gamma_j^{\beta}+1)\rho\bm{x}_j^{\beta 1}-\gamma_j^{\beta}\rho\bm{x}_j^{\beta 0}+\bm{\lambda}_{j,j}^{\beta\alpha 0}-\rho\bm{x}_j^{\alpha 0}=\bm{0},} \\		
	&\qquad\qquad\qquad\qquad\vdots \\
	\lefteqn{ \triangledown f_j^{\alpha K}(\bm{x}_{j}^{\alpha K})+(\gamma_j^{\alpha}+D_{j}+1)\rho\bm{x}_j^{\alpha K}-\gamma_j^{\alpha}\rho\bm{x}_j^{\alpha (K-1)}+}\\
	&\sum\limits_{m\in \mathcal{N}_{j}}(\bm{\lambda}_{j,m}^{\alpha (K-1)}-\rho\bm{x}_m^{\alpha (K-1)})
	+\bm{\lambda}_{j,j}^{\alpha\beta (K-1)}-\rho\bm{x}_j^{\beta (K-1)}=\bm{0},\\
	\lefteqn{ \triangledown f_j^{\beta K}(\bm{x}_{j}^{\beta K})+(\gamma_j^{\beta}+1)\rho\bm{x}_j^{\beta K}-\gamma_j^{\beta}\rho\bm{x}_j^{\beta (K-1)}} \\
	&\qquad\qquad\qquad\qquad\qquad\quad+\bm{\lambda}_{j,j}^{\beta\alpha (K-1)}-\rho\bm{x}_j^{\alpha (K-1)}=\bm{0}. \\	
	\end{aligned}
	\right.
	\end{equation}
	In the system of $2nK$ equations \eqref{eq:j}, $\triangledown f_j^{\alpha k}(\bm{x}_{j}^{\alpha k})$ $(k=1,2,\ldots,K)$, $\triangledown f_j^{\beta k}(\bm{x}_{j}^{\beta k})$ $(k=1,2,\ldots,K)$, $\gamma_j^{\alpha}$, $\gamma_j^{\beta}$, and $\bm{x}_{j}^{\beta k}$ $(k=0,1,2,\ldots,K)$ are unknown to adversary agent $i$. Parameters $\bm{x}_m^{\alpha k}, m\neq j$ and $\bm{\lambda}_{j,m}^{\alpha k}, m\neq j$ are known to adversary agent $i$ only when agent $m$ and agent $i$ are neighbors.  So the above system of $2nK$ equations contains at least $3nK+n+2$ unknown variables, and adversary agent $i$ cannot infer the gradient of local objective function $\triangledown f_j$  by solving \eqref{eq:j}. 
	
	It is worth noting that after the optimization algorithm converges, adversary agent $i$ can have another piece of information according to the KKT conditions \cite{deng2017parallel}:
	\begin{equation} \label{eq:optimal}
	\begin{aligned}
	\triangledown f_j(\bm{x}_{j}^{*})=-\sum_{m\in\mathcal{N}_j}\bm{\lambda}_{j,m}^{\alpha*}.
	\end{aligned}
	\end{equation} 
	If agent $j$'s neighbors are also neighbors to agent $i$, the exact gradient of $f_j$ at the optimal solution can be inferred by an honest-but-curious agent $i$. Therefore, agent $j$'s gradient of local objective function $\triangledown f_j$  will not be revealed to an honest-but-curious agent $i$  at any point except the optimal solution.
\end{proof}

\begin{Corollary 1}
	In Algorithm \uppercase\expandafter{\romannumeral1}, agent $j$'s gradient of local objective function $\triangledown f_j$ at any point except the optimal solution will not be revealed to external eavesdroppers.
\end{Corollary 1}
\begin{proof}
	The proof can be obtained following a similar line of reasoning of Theorem \ref{theorem:privacy}. External eavesdroppers can also establish a system of $2nK$ equations \eqref{eq:j} to infer agent $j$'s gradient $\triangledown f_j$. However, the number of unknowns $\triangledown f_j^{\alpha k}(\bm{x}_{j}^{\alpha k})$ $(k=1,2,\ldots,K)$, $\triangledown f_j^{\beta k}(\bm{x}_{j}^{\beta k})$ $(k=1,2,\ldots,K)$, $\gamma_j^{\alpha}$, $\gamma_j^{\beta}$, and $\bm{x}_{j}^{\beta k}$ $(k=0,1,2,\ldots,K)$ adds up to $3nk+n+2$, making the system of equations established by the external eavesdropper undetermined. Therefore, external eavesdroppers cannot infer  the gradient of local objective function $\triangledown f_j$  at any point except the optimal solution.  
\end{proof}

\begin{Remark 1}
	It is worth noting that if multiple adversary agents cooperate to infer the information of agent $j$, they can only establish a system of $2nK$ equations containing at least $3nK+n+2$ unknown variables as well. Therefore, our algorithm can protect the privacy of agents against multiple honest-but-curious  adversaries and external eavesdroppers.
\end{Remark 1}

\section{Convergence Analysis}
	In this section,  we rigorously prove the convergence of Algorithm  \uppercase\expandafter{\romannumeral1} under the following assumptions:
	\begin{Assumption 1} \label{assumption 1}
		Each local function $f_i: \mathbb{R}^n\to\mathbb{R}$ is strongly convex and continuously differentiable, i.e., $$(\triangledown f_i(\tilde{\bm{x}})-\triangledown f_i(\tilde{\bm{y}}))^T(\tilde{\bm{x}}-\tilde{\bm{y}})\ge m_{i}\parallel\tilde{\bm{x}}-\tilde{\bm{y}}\parallel^2.$$ In addition, there exists a lower bound $m_f>0$ such that $m_{i}\ge2m_f, \forall i=\{1,2,\ldots,N\}$  is true.
	\end{Assumption 1}
	\begin{Assumption 1} \label{assumption 2}
		Each private local function $f_i: \mathbb{R}^n\to\mathbb{R}$ has Lipschitz continuous  gradients, i.e., $$\parallel\triangledown f_i(\tilde{\bm{x}})-\triangledown f_i(\tilde{\bm{y}})\parallel\le L_i\parallel\tilde{\bm{x}}-\tilde{\bm{y}}\parallel.$$ % In addition, there is a $L<+\infty$ such that $L_i\le L, \forall i=\{1,2,\ldots,N\}$  is true.
	\end{Assumption 1}
	\begin{Assumption 1} \label{assumption gi}
		$f_i^{\alpha k}$ is chosen under the following conditions:
		
		1)  $f_i^{\alpha k}$  is strongly convex and differentiable, i.e.,  $$(\triangledown f_i^{\alpha k}(\tilde{\bm{x}})-\triangledown f_i^{\alpha k}(\tilde{\bm{y}}))^T(\tilde{\bm{x}}-\tilde{\bm{y}})\ge m_f\parallel\tilde{\bm{x}}-\tilde{\bm{y}}\parallel^2.$$
				
		2)  $f_i^{\beta k}=f_i-f_i^{\alpha k}$  is strongly convex and differentiable, i.e.,  $$(\triangledown f_i^{\beta k}(\tilde{\bm{x}})-\triangledown f_i^{\beta k}(\tilde{\bm{y}}))^T(\tilde{\bm{x}}-\tilde{\bm{y}})\ge m_f\parallel\tilde{\bm{x}}-\tilde{\bm{y}}\parallel^2.$$
		
		3) $f_i^{\alpha k}$  has Lipschitz continuous  gradients, i.e.，  there exists an $L<+\infty$ such that
			$$\parallel\triangledown f_i^{\alpha k}(\tilde{\bm{x}})-\triangledown f_i^{\alpha k}(\tilde{\bm{y}})\parallel\le L\parallel\tilde{\bm{x}}-\tilde{\bm{y}}\parallel.$$
		
		4) $f_i^{\beta k}=f_i-f_i^{\alpha k}$  has Lipschitz continuous  gradients, i.e.， there exists an $L<+\infty$ such that
		 $$\parallel\triangledown f_i^{\beta k}(\tilde{\bm{x}})-\triangledown f_i^{\beta k}(\tilde{\bm{y}})\parallel\le L\parallel\tilde{\bm{x}}-\tilde{\bm{y}}\parallel.$$
		 
		 5) $\lim\limits_{k\to\infty}f_i^{\alpha k}\to f_i^{\alpha*}$ and $f_i^{\alpha k}(\tilde{\bm{x}})$ is bounded when $\tilde{\bm{x}}$ is bounded.		 
	\end{Assumption 1}
	
	It is worth noting that under Assumption \ref{assumption 1} and Assumption \ref{assumption 2}, $f_i^{\alpha k}$  can be easily designed to meet Assumption \ref{assumption gi}. A quick example is $f_i^{\alpha k}(\tilde{\bm{x}})=\frac{m_f }{2}\tilde{\bm{x}}^T\tilde{\bm{x}}+\bm{b}_i^{kT}\tilde{\bm{x}}$ where $\bm{b}_i^k\in \mathbb{R}^n$ can be time-varying, and satisfies $\lim\limits_{k\to \infty}\bm{b}_i^k\to \bm{b}_i^*$ and $-\infty<\parallel\bm{b}_i^k\parallel<\infty$.
	
	Because the function decomposition process amounts to converting the original network to  a virtual network $G'=(V',E')$ of $2N$ agents, as shown in Fig. \ref{fig:G2N}, we analyze the convergence of our algorithm based on the virtual network $G'=(V',E')$. To simplify and unify  the notations, we relabel the local objective functions  $f_i^{\alpha k}$ and $f_i^{\beta k}$ for all $i=1,2,\ldots,N$ as $h_1^k, h_2^k,\dots,h_{2N}^k$.  We relabel the associated states $\bm{x}_i^{\alpha k}$ and $\bm{x}_i^{\beta k}$ for all $i=1,2,\dots,N$  as $\bm{x}_1^k, \bm{x}_2^k,\dots,\bm{x}_{2N}^k$.  In addition, we relabel parameters $\gamma_i^{\alpha}$ and $\gamma_i^{\beta}$ for all $i=1,2,\dots,N$  correspondingly as $\gamma_1, \gamma_2,\ldots,\gamma_{2N}$.  Then problem \eqref{eq:decomposition admm form} can be rewritten as
		\begin{figure}
			\begin{center}
				\subfloat[$G=(V,E)$ of $N$ agents]{\includegraphics[width=0.185\textwidth]{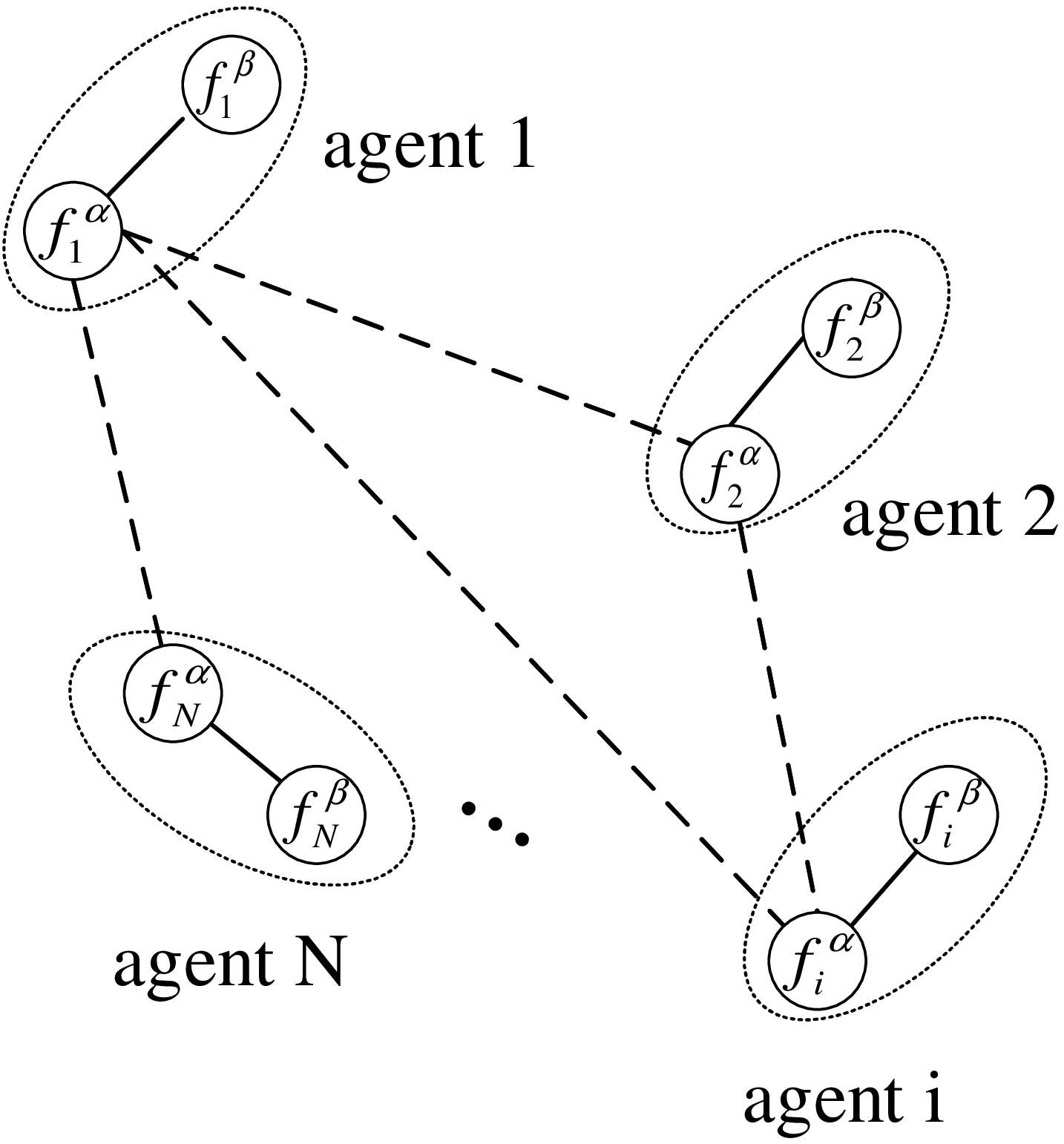}%
					\label{fig: G}}
				\hspace{.3in} 
				\subfloat[Virtual network $G'=(V',E')$ of $2N$ agents]{\includegraphics[width=0.245\textwidth]{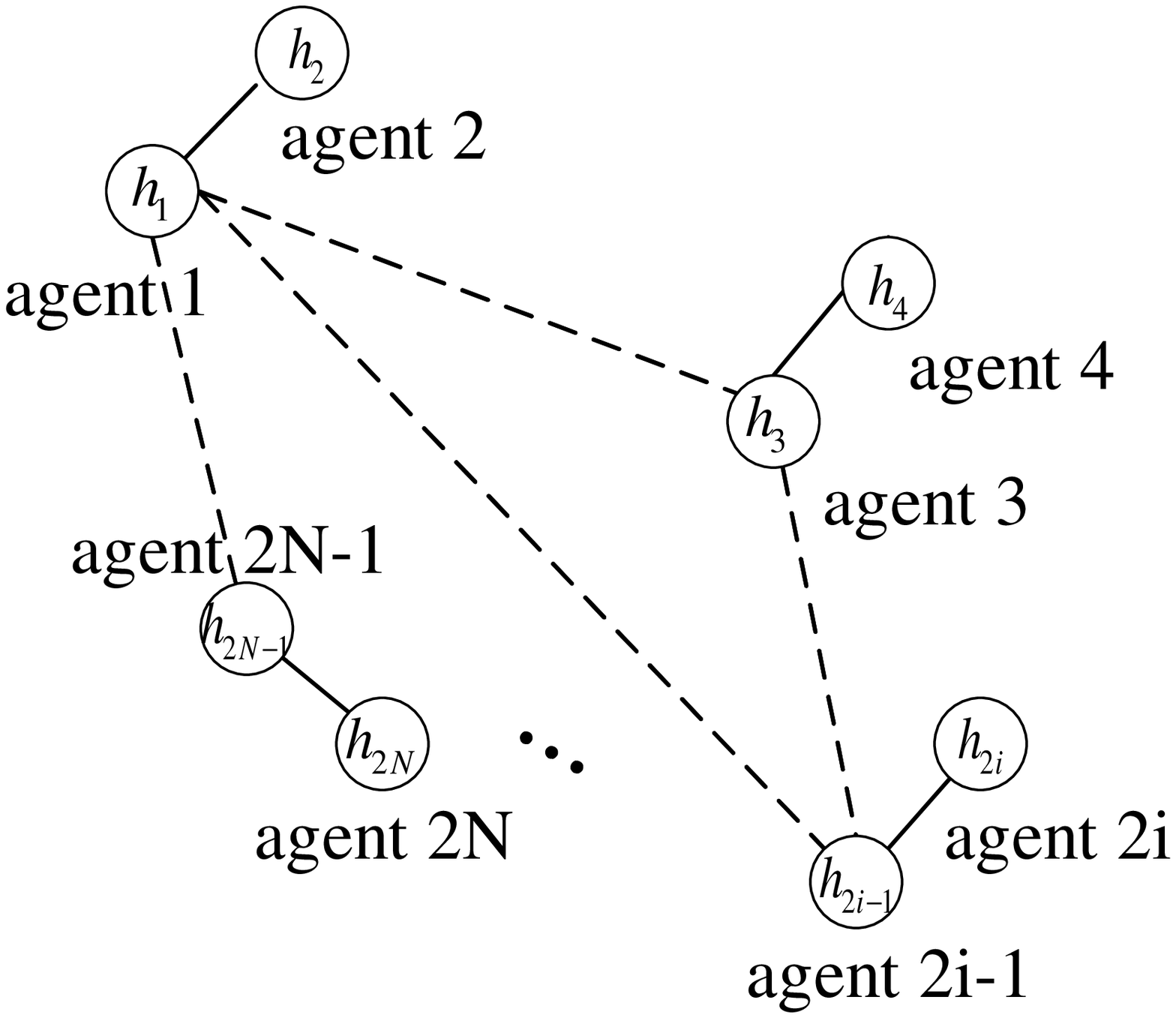}%
					\label{fig: G'}}
			\end{center}
			\caption{Function-decomposition based privacy-preserving decentralized optimization equals to converting the original network into  a virtual network $G'=(V',E')$ of $2N$ agents.}
			\label{fig:G2N}
		\end{figure}

	\begin{equation}\label{eq:decomposition admm form 2}
		\begin{aligned}
		&\mathop {\min }\limits_{\bm{x}_{i}\in\mathbb{R}^n,\,i\in\{1,2,\ldots,2N\}} \qquad \sum\limits_{i=1}^{2N}h_i^{k}(\bm{x}_{i})\\
		&\qquad\textrm{subject to} \qquad\qquad A\bm{x}=\bm{0}
		\end{aligned}
	\end{equation}
	where $\bm{x}=[\bm{x}_1^T, \bm{x}_2^T,\ldots,\bm{x}_{2N}^T]^T\in\mathbb{R}^{2Nn}$ and $A=[a_{m,l}]\otimes I_n\in\mathbb{R}^{|E'|n\times 2Nn}$ is the edge-node incidence matrix of graph $\mathcal{G}'$ as defined in \cite{wei2012distributed}. Parameter $a_{m,l}$  is defined as
	\begin{eqnarray} \nonumber
		a_{m,l}=\left\{\begin{matrix}
		1& \textrm {if the } m^{th} \textrm{ edge originates from agent }  l,  \\
		-1& \textrm {if the } m^{th} \textrm{ edge terminates at agent }  l, \\
		0& \textrm{otherwise}.
		\end{matrix}\right.
	\end{eqnarray}
	We define each edge $e_{i,j}$ originating from $i$ and terminating at $j$ and denote an edge as $e_{i,j}\in E'$ if $i<j$ is true or $e_{j,i}\in E'$ otherwise. 
	
	Denote the iterating results in the $k$th step in Algorithm \uppercase\expandafter{\romannumeral1} as follows:
	\begin{displaymath}
		\begin{aligned}
		&\bm{x}^k=[\bm{x}_1^{kT},\bm{x}_2^{kT},\ldots,\bm{x}_{2N}^{kT} ]^T\in\mathbb{R}^{2Nn}, \\
		&\bm{\lambda}^k=[\bm{\lambda}_{i,j}^k]_{ij,e_{i,j}\in E'}\in\mathbb{R}^{|E'|n}, \\
		&\bm{y}^k=[\bm{x}^{kT},\bm{\lambda}^{kT}]^T\in\mathbb{R}^{(|E'|+2N)n}
		\end{aligned}
	\end{displaymath}
	Further augment the coefficients $\gamma_i$ $(i=1,2,\ldots,2N)$  into the matrix form
	\begin{displaymath}
		U={\rm diag}\{\gamma_1,\gamma_2,\ldots,\gamma_{2N}\}\otimes I_n\in\mathbb{R}^{2Nn\times 2Nn},
	\end{displaymath}
	and $D_i$ into the matrix form
	\begin{displaymath}
		D={\rm diag}\{D_1,D_2,\ldots,D_{2N}\}\otimes I_n\in\mathbb{R}^{2Nn\times 2Nn}.
	\end{displaymath}
	
	Now we are in position to give the main results for this section:
	\begin{Lemma 1} \label{lemma 1}
		Let $\bm{x}^*$ be the optimal solution, $\bm{\lambda}^{k*}$ be the optimal multiplier to \eqref{eq:decomposition admm form 2} at iteration $k$, and $\bm{y}^{k*}=[\bm{x}^{*T},\bm{\lambda}^{k*T}]^T$. Further define $Q=U+D-A^TA$, $H={\rm diag}\{\rho Q, \frac{1}{\rho}I_{|E'|n}\}$, and let $u>1$ be an arbitrary constant, then we have
		 \begin{equation} \label{eq:lemma 1}
			 \begin{aligned}
			 \parallel \bm{y}^{k+1}-\bm{y}^{k+1*}\parallel_H\le\frac{ \parallel \bm{y}^{k}-\bm{y}^{k+1*}\parallel_H}{\sqrt{1+\delta}}
			 \end{aligned}
		 \end{equation}
		  if $U+D - A^TA$ is positive semi-definite and Assumptions \ref{assumption 1},  \ref{assumption 2}, and  \ref{assumption gi} are satisfied. In \eqref{eq:lemma 1},  $\parallel \tilde{\bm{x}}\parallel_H=\sqrt{\tilde{\bm{x}}^TH\tilde{\bm{x}}}$ and
		  \begin{equation}
			  \begin{aligned}
			  \delta=\min\{\frac{(u-1)A_{\min}}{u Q_{\max}}, \frac{2m_fA_{\min}\rho}{uL^2+\rho^2A_{\min}Q_{\max}}\},
			  \end{aligned}
		  \end{equation}
		  where $Q_{\max}$ is the largest eigenvalue of $Q$, $A_{\min}$ is the smallest nonzero eigenvalue of $A^TA$, $m_f$ is the strongly convexity modulus, and $L$ is the Lipschitz modulus. 
	\end{Lemma 1}
	\begin{proof}
		The results can be obtained following a similar line of reasoning in \cite{ling2014decentralized}. The detailed proof is given in the supplementary materials and can be found online \cite{zhang2018privacy}. 
	\end{proof}
	
	\begin{Lemma 1} \label{lemma 2}
		Let $\bm{x}^*$ be the optimal solution, $\bm{\lambda}^{k*}$ be the optimal multiplier to \eqref{eq:decomposition admm form 2} at iteration $k$, and $\bm{y}^{k*}=[\bm{x}^{*T},\bm{\lambda}^{k*T}]^T$. Further define $Q=U+D-A^TA$ and $H={\rm diag}\{\rho Q, \frac{1}{\rho}I_{|E'|n}\}$, then we have
		\begin{equation} \label{eq:lemma 2}
			\begin{aligned}
			\parallel \bm{y}^{k}-\bm{y}^{k+1*}\parallel_H\le\parallel \bm{y}^{k}-\bm{y}^{k*}\parallel_H+p(k)
			\end{aligned}
		\end{equation}
	    if $U+D - A^TA$ is positive semi-definite and Assumptions \ref{assumption 1},  \ref{assumption 2}, and  \ref{assumption gi} are satisfied. In \eqref{eq:lemma 2},
		\begin{equation}
			\begin{aligned}
		    p(k)=\frac{1}{\sqrt{\rho A_{\min}}}\parallel\triangledown h^{k+1}(\bm{x}^*)-\triangledown h^k(\bm{x}^*) \parallel
			\end{aligned}
		\end{equation}
		where $h^k(\bm{x})=\sum\limits_{i=1}^{2N}h_i^{k}(\bm{x}_{i})$.
	\end{Lemma 1}
	\begin{proof}
		The results can be obtained following a similar line of reasoning in \cite{ling2014decentralized}. The detailed proof is given in the supplementary materials and can be found online  \cite{zhang2018privacy}. 
	\end{proof}
	
	\begin{Lemma 1} \label{lemma 3}
		Let $\bm{x}^*$ be the optimal solution, $\bm{\lambda}^{k*}$ be the optimal multiplier to \eqref{eq:decomposition admm form 2} at iteration $k$, and $\bm{y}^{k*}=[\bm{x}^{*T},\bm{\lambda}^{k*T}]^T$. Further define $Q=U+D-A^TA$ and $H={\rm diag}\{\rho Q, \frac{1}{\rho}I_{|E'|n}\}$, then we have
		 \begin{equation} \label{eq:lemma 3}
			 \begin{aligned}
			 \parallel \bm{y}^{k+1}-\bm{y}^{k+1*}\parallel_H\le\frac{ \parallel \bm{y}^{k}-\bm{y}^{k*}\parallel_H}{\sqrt{1+\delta}}+\frac{ p(k)}{\sqrt{1+\delta}}
			 \end{aligned}
		 \end{equation}
		if $U+D - A^TA$ is positive semi-definite and Assumptions \ref{assumption 1},  \ref{assumption 2}, and   \ref{assumption gi} are satisfied.
	\end{Lemma 1}
	\begin{proof}
		Combining \eqref{eq:lemma 1} and  \eqref{eq:lemma 2}, we obtain the result directly.
	\end{proof}
	
	Lemma \ref{lemma 3} indicates that $\parallel \bm{y}^{k+1}-\bm{y}^{k+1*}\parallel_H$ converges linearly to a neighborhood of $0$.
	
	\begin{theorem}\label{Theorem:ADMM Convergence}	
		Algorithm \uppercase\expandafter{\romannumeral1} is guaranteed to converge to the optimal solution to \eqref{eq:decomposition admm form 2} if $U+D - A^TA$ is positive semi-definite and Assumption \ref{assumption 1}, Assumption \ref{assumption 2}, and Assumption  \ref{assumption gi} are satisfied. 
	\end{theorem}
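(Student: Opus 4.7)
The plan is to apply Lemma \ref{lemma 3} recursively and exploit the fact that the perturbation term $p(k)$ vanishes as $k\to\infty$. Setting $a_k=\parallel \bm{y}^{k}-\bm{y}^{k*}\parallel_H$ and $r=1/\sqrt{1+\delta}\in(0,1)$, Lemma \ref{lemma 3} gives the scalar recursion $a_{k+1}\le r\,a_k + r\,p(k)$. Unrolling yields $a_k\le r^k a_0+\sum_{j=0}^{k-1}r^{k-j}p(j)$, so convergence reduces to showing (i) $p(k)\to 0$ and (ii) a geometric-convolution-with-vanishing-input argument.

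The first step is to verify $p(k)\to 0$. By Assumption \ref{assumption gi}, part 5), the sequence $f_i^{\alpha k}$ converges to some $f_i^{\alpha*}$, and hence $f_i^{\beta k}=f_i-f_i^{\alpha k}$ converges to $f_i^{\beta*}$; combined with the uniform Lipschitz bounds in parts 3) and 4), pointwise convergence of gradients at the fixed point $\bm{x}^*$ follows. Consequently $\parallel\triangledown h^{k+1}(\bm{x}^*)-\triangledown h^k(\bm{x}^*)\parallel\to 0$, which drives $p(k)\to 0$.

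The second step is a standard vanishing-perturbation argument: given any $\varepsilon>0$, fix $K_0$ so that $p(j)<\varepsilon(1-r)/r$ for $j\ge K_0$; split the sum $\sum_{j=0}^{k-1}r^{k-j}p(j)$ at $K_0$; the tail is bounded by $\varepsilon$ via a geometric series, and the head is bounded by $r^{k-K_0}\sum_{j<K_0}r^{K_0-j}p(j)\to 0$ as $k\to\infty$. Together with $r^k a_0\to 0$, this gives $a_k\to 0$, i.e. $\parallel\bm{y}^k-\bm{y}^{k*}\parallel_H\to 0$.

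Finally, I would show $\bm{y}^{k*}\to\bm{y}^{*}$, where the primal component $\bm{x}^*$ is independent of $k$ (since $\sum_i h_i^k=\sum_i f_i$ irrespective of the decomposition, the feasible minimizer of \eqref{eq:decomposition admm form 2} does not change with $k$) and the dual component $\bm{\lambda}^{k*}$, characterized by $A^T\bm{\lambda}^{k*}=-\triangledown h^k(\bm{x}^*)$ in the range of $A$, converges to the dual optimizer $\bm{\lambda}^*$ determined by the limit $\triangledown h^*(\bm{x}^*)$. The triangle inequality $\parallel\bm{y}^k-\bm{y}^*\parallel_H\le \parallel\bm{y}^k-\bm{y}^{k*}\parallel_H+\parallel\bm{y}^{k*}-\bm{y}^*\parallel_H$ then yields convergence of the iterates in the $H$-(semi)norm; componentwise convergence of the primal block $\bm{x}^k\to\bm{x}^*$ follows because the ADMM updates \eqref{eq:decomposition admm 1}--\eqref{eq:decomposition admm 5} keep $\bm{\lambda}^k$ in the column space of $A$ under the prescribed initialization, so $H$ acts as a true inner product on the subspace swept out by the iterates.

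The main obstacle I anticipate is exactly this last step: transferring $H$-seminorm convergence to convergence of the primal iterates when $U+D-A^TA$ is only positive semi-definite rather than strictly positive definite, and pinning down a consistent choice of $\bm{\lambda}^{k*}$ in $\mathrm{range}(A)$ so that $\bm{\lambda}^{k*}\to\bm{\lambda}^*$ is well-defined. Everything else is either a direct invocation of Lemma \ref{lemma 3} or a textbook perturbed-contraction calculation.
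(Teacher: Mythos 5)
Your proposal follows essentially the same route as the paper's proof: unroll the recursion from Lemma \ref{lemma 3}, show $p(k)\to 0$ from Assumption \ref{assumption gi}, part 5), and apply the standard geometric-convolution-with-vanishing-input argument to conclude $\parallel\bm{y}^k-\bm{y}^{k*}\parallel_H\to 0$. The only difference is that you are more careful at the final step: the paper simply concludes from $\parallel\bm{x}^k-\bm{x}^*\parallel_Q\le\parallel\bm{y}^k-\bm{y}^{k*}\parallel_H$ that $\parallel\bm{x}^k-\bm{x}^*\parallel_Q\to 0$ and stops there, without addressing the seminorm issue you correctly flag when $Q=U+D-A^TA$ is only positive semi-definite.
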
	
	\begin{proof}
		The proof is provided in the Appendix. 
	\end{proof}

\begin{Remark 1}
	It is worth noting that problem \eqref{eq:decomposition admm form 2} is a reformulation of problem \eqref{eq:global function}. So Theorem \ref{Theorem:ADMM Convergence} guarantees that each agent's state will converge to the optimal solution to \eqref{eq:global function}. 
\end{Remark 1}

\section{Numerical Experiments}
We first present a numerical example to illustrate the efficiency of the proposed approach. Then we compare our approach with the differential-privacy based algorithm in \cite{huang2015differentially}. We conducted numerical experiments on the following optimization problem.
\begin{eqnarray}
	\min\limits_{\tilde{\bm{x}}}\qquad \sum\limits_{i=1}^{N}\parallel \tilde{\bm{x}}-\bm{y}_i\parallel^2 \label{eq:simulation}
\end{eqnarray}
with $\bm{y}_i\in\mathbb{R}^n$. Each agent $i$  deals with a private local objective function
\begin{eqnarray} \label{eq:individual function}
	f_i(\bm{x}_i)=\parallel \bm{x}_i-\bm{y}_i\parallel^2, \forall i\in\{1,2,\ldots,N\}. 
\end{eqnarray}
We used the above optimization problem \eqref{eq:simulation} because it is easy to verify whether the obtained value  is the optimal solution, which should be $\frac{\sum_{i=1}^{N}\bm{y}_i}{N}$. Furthermore, \eqref{eq:simulation} makes it easy to compare with \cite{huang2015differentially}, whose simulation is also based on \eqref{eq:simulation}. 
	
\subsection{Evaluation of Our Approach}
	To solve the optimization problem \eqref{eq:simulation}, $f_i^{\alpha k}(\tilde{\bm{x}})$ was set to $f_i^{\alpha k}(\tilde{\bm{x}})=\frac{1}{2}\tilde{\bm{x}}^T\tilde{\bm{x}}+(\bm{b}_i^k)^{T}\tilde{\bm{x}}$ for our approach in the simulations,  where $\bm{b}_i^k$ was set to $\bm{b}_i^k=\frac{1}{k+1}\bm{c}_i+\bm{d}_i$ with $\bm{c}_i \in \mathbb{R}^n$ and $\bm{d}_i \in \mathbb{R}^n$ being constants private to agent $i$. Fig. \ref{fig:x} visualizes the evolution of $\bm{x}_i^{\alpha}$ and $\bm{x}_i^{\beta}$ $(i=1,2,...,6)$  in one specific run where the network deployment is illustrated in Fig. \ref{fig:network structure illustration 1}.  All $\bm{x}_i^{\alpha}$ and $\bm{x}_i^{\beta}$ $(i=1,2,...,6)$ converged to the optimal solution $13.758$. 
	\begin{figure}[!htbp]
		\begin{center}
			\includegraphics[width=0.4\textwidth]{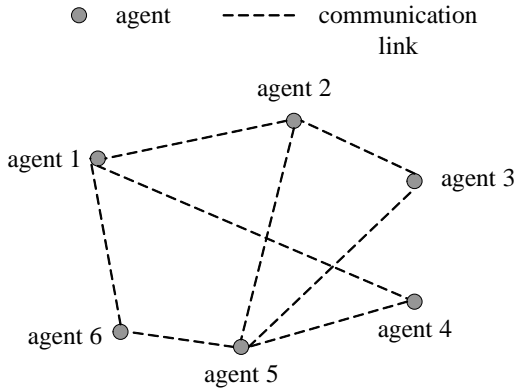}
		\end{center}
		\caption{A network of six agents ($N=6$).}
		\label{fig:network structure illustration 1}
	\end{figure}

	\begin{figure}[!htbp]
		\begin{center}
			\includegraphics[width=0.49\textwidth]{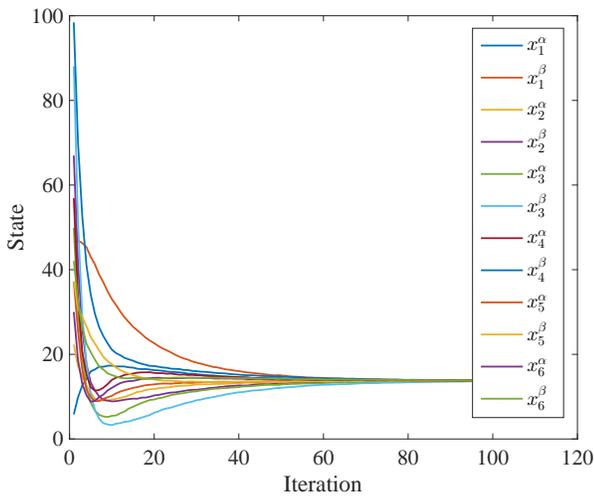}
		\end{center}
		\caption{The evolution of $\bm{x}_i^{\alpha}$ and $\bm{x}_i^{\beta}$ in our approach.}
		\label{fig:x}
	\end{figure}
	
\subsection{Comparison with the algorithm in \cite{huang2015differentially} }
	 Under the network deployment in Fig. \ref{fig:network structure illustration 1}, we compared our privacy-preserving approach with the differential-privacy based  algorithm in \cite{huang2015differentially}. We simulated the algorithm in \cite{huang2015differentially} under seven different privacy levels: $$\epsilon=0.2, 1, 10, 20, 30, 50, 100.$$ In the objective function  \eqref{eq:simulation}, $\bm{y}_i$ was set to $\bm{y}_i=[0.1\times (i-1)+0.1;0.1\times (i-1)+0.2]$. The domain of optimization for the algorithm in \cite{huang2015differentially} was set to $\mathcal{X}=\{(x,y)\in\mathbb{R}^2|x^2+y^2\le 1\}$. Note that the optimal solution $[0.35;0.45]$ resided in $\mathcal{X}$. 
	Detailed parameter settings for the algorithm in \cite{huang2015differentially} were given as $n=2$, $c=0.5$, $q=0.8$, $p=0.9$, and 
	\begin{eqnarray} \label{eq: aij}
		a_{ij}=\left\{
		\begin{aligned}
			0.2 \qquad\qquad\qquad &  j\in\mathcal{N}_i, \\
			0 \qquad\qquad\qquad &  j\notin\mathcal{N}_i, j\neq i, \\
			1-\sum\limits_{j\in\mathcal{N}_i}a_{ij} \qquad & i=j,
		\end{aligned}
		\right.
	\end{eqnarray} 
	for $i=1,2,...,6$. In addition, the performance index $d$ in \cite{huang2015differentially} was used to quantify the optimization error here, which was computed as the average value of squared distances with respect to the optimal solution over $M$ runs \cite{huang2015differentially}, i.e.,
	$$d=\frac{\sum\limits_{i=1}^{6}\sum\limits_{l=1}^{M}\parallel \bm{x}_i^l-[0.35;0.45]\parallel^2}{6M}.$$
	Here $\bm{x}_i^l$ is the obtained solution of agent $i$ in the $l$th run.  For our approach, $\bm{x}_i^l$ was calculated as the average of $\bm{x}_i^{\alpha l}$ and $\bm{x}_i^{\beta l}$.
	
	Simulation results from 5,000 runs showed that our approach  converged to $[0.35;0.45]$ with an error $d=5.1 \times10^{-4}$, which is negligible compared with the simulation results under the algorithm in  \cite{huang2015differentially} (cf. Fig. \ref{fig:d}, where each differential privacy level was implemented for 5,000 times). The results confirm the trade-off between privacy and accuracy in differential-privacy based approaches.    
	
	\begin{figure}[!htbp]
		\begin{center}
			\includegraphics[width=0.48\textwidth]{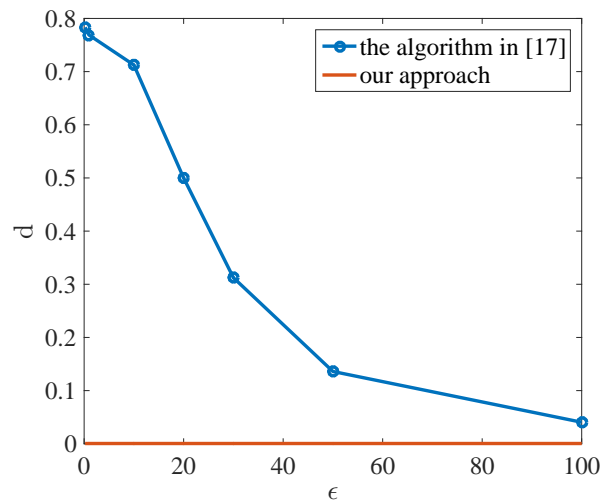}
		\end{center}
		\caption{The comparison of our approach with the algorithm in  \cite{huang2015differentially} in terms of optimization error.}
		\label{fig:d}
	\end{figure}
\section{Conclusions}
	In this paper, we proposed a novel  approach to enabling privacy-preservation in decentralized optimization based on function decomposition, which neither compromises the optimality of optimization nor relies on an aggregator or third party. Theoretical analysis confirms that an honest-but-curious adversary cannot infer the information of neighboring agents even by recording and analyzing the information exchanged in multiple iterations. In addition, our approach can also avoid an external eavesdropper from inferring the information of participating agents. Numerical simulation results confirmed the effectiveness of the proposed approach.

\section*{APPENDIX}
	\subsection{Proof of Theorem \ref{Theorem:ADMM Convergence}}
	From Lemma \ref{lemma 3}, we can obtain 
	\begin{equation}
		\begin{aligned}
		\sqrt{1+\delta}^k\parallel \bm{y}^{k}-\bm{y}^{k*}\parallel_H\le \parallel \bm{y}^{0}-\bm{y}^{0*}\parallel_H+ \sum_{s=0}^{k-1}\sqrt{1+\delta}^{s}p(s).
		\end{aligned}
	\end{equation}
	Dividing both sides by $\sqrt{1+\delta}^k$, we have
	\begin{equation} \label{eq:ylim0}
		\begin{aligned}
		\parallel \bm{y}^{k}-\bm{y}^{k*}\parallel_H\le \frac{\parallel\bm{y}^{0}-\bm{y}^{0*}\parallel_H}{\sqrt{1+\delta}^k}+ \sum_{s=0}^{k-1}\frac{1}{\sqrt{1+\delta}^{k-s}}p(s).
		\end{aligned}
	\end{equation}
	It is clear  $\lim\limits_{k\to\infty}\frac{\parallel\bm{y}^{0}-\bm{y}^{0*}\parallel_H}{\sqrt{1+\delta}^k}= 0$ due to $\delta>0$. Now our main goal is to prove $\lim\limits_{k\to\infty}\sum_{s=0}^{k-1}\frac{1}{\sqrt{1+\delta}^{k-s}}p(s)= 0$. Recall that in Assumption \ref{assumption gi} we have $\lim\limits_{k\to\infty} f_i^{\alpha k}\to f_i^{\alpha *}$. Therefore, we have $\lim\limits_{k\to\infty} h_i^k\to h_i^*$ for all $i=1,2,\dots, 2N$, i.e., $h_i^k$ converges to a fixed function $h_i^*$ ($h^k$ converges to a fixed function $h^*$). On the other hand, we have
	\begin{equation}
		\begin{aligned}
		p(k)=\frac{1}{\sqrt{\rho A_{\min}}}\parallel\triangledown h^{k+1}(\bm{x}^*)-\triangledown h^k(\bm{x}^*) \parallel.
		\end{aligned}
	\end{equation}
	
	As a result of the convergence of $h^k$, we have $\lim\limits_{k\to\infty}p(k)=0$. Therefore, we have that $p(k)$ is bounded, i,e., there exists a $B$ such that $p(k)\le B, \forall k$. In addition, we have
	$$\forall  \varepsilon_1>0, \quad \exists N_1\in\mathbb{N}^+, \text{\quad s.t.\quad} |p(k)|\le\varepsilon_1, \quad \forall k\ge N_1,$$
	where $\mathbb{N}^+$ is the set of positive integers.
	Further letting $\eta=\frac{1}{\sqrt{1+\delta}}$ and $F(k)=\sum_{s=0}^{k-1}\frac{1}{\sqrt{1+\delta}^{k-s}}p(s)$, we have $\eta\in(0,1)$ and
	
	 \begin{equation}
		\begin{aligned}
		 F(k)&=\sum_{s=0}^{k-1}\eta^{k-s}p(s) \\
		 &=\sum_{s=0}^{N_1}\eta^{k-s}p(s)+\sum_{s=N_1+1}^{k-1}\eta^{k-s}p(s) \\
		 &\le B\sum_{s=0}^{N_1}\eta^{k-s}+\varepsilon_1\sum_{s=N_1+1}^{k-1}\eta^{k-s} \\
		 &= B\eta^k\frac{\eta^{-N_1}-\eta}{1-\eta}+\varepsilon_1\frac{\eta-\eta^{k-N_1-1}}{1-\eta} \\
		 &\le B\eta^k\frac{\eta^{-N_1}-\eta}{1-\eta}+\varepsilon_1\frac{\eta}{1-\eta}
		\end{aligned}
	\end{equation}
	for $k\ge N_1+2$.
	
	Recalling $\eta\in(0,1)$, we have $\lim\limits_{k\to\infty}B\eta^k\frac{\eta^{-N_1}-\eta}{1-\eta}=0$ and
	 \begin{equation}
		 \begin{aligned}
		 &\forall  \varepsilon=\varepsilon_1>0, \quad \exists N_2\in\mathbb{N}^+, \\
		 &\text{s.t.\quad} |B\eta^k\frac{\eta^{-N_1}-\eta}{1-\eta}|\le\varepsilon_1, \quad \forall k\ge N_2,
		 \end{aligned}
	 \end{equation}
	
	Therefore, we can obtain
	 \begin{equation}
		 \begin{aligned}
		 &\forall  \varepsilon=\varepsilon_1>0, \quad \exists N=\max\{N_1,N_2\}, \\
		 &\text{s.t.\quad} |F(k)|\le\varepsilon_1+\varepsilon_1\frac{\eta}{1-\eta}=\frac{1}{1-\eta}\varepsilon_1, \quad \forall k\ge N,
		 \end{aligned}
	 \end{equation}
	 which proves that $\lim\limits_{k\to\infty}F(k)=0$. Then according to \eqref{eq:ylim0}, we have $\lim\limits_{k\to\infty}\parallel \bm{y}^{k}-\bm{y}^{k*}\parallel_H=0$. Since  $\parallel \bm{x}^{k}-\bm{x}^{*}\parallel_Q\le \parallel \bm{y}^{k}-\bm{y}^{k*}\parallel_H$, we have $\lim\limits_{k\to\infty}\parallel \bm{x}^{k}-\bm{x}^{*}\parallel_Q=0$ as well, which completes the proof.

\bibliographystyle{unsrt}
\bibliography{abbr_bibli}

\clearpage
\newpage
\onecolumn
\begin{center}
	\textbf{\large Supplemental Materials: Privacy-preserving Decentralized Optimization via Decomposition }
\end{center}
\subsection{Proof of Lemma \ref{lemma 1}}
According to the update rules in \eqref{eq:decomposition admm 1} and \eqref{eq:decomposition admm 2} , we have
\begin{equation}
\begin{aligned} \label{eq:subproblem}
\triangledown h_i^{k+1}(\bm{x}_i^{k+1})+\sum_{j\in\mathcal{N}_i}(\bm{\lambda}_{i,j}^k+\rho(\bm{x}_i^{k+1}-\bm{x}_j^k))
+\gamma_i\rho(\bm{x}_i^{k+1}-\bm{x}_i^k)=\bm{0}
\end{aligned}
\end{equation}
for all $i=1,2,\ldots,2N$.

Let $(\bm{x}^*, \bm{\lambda}^{k*})$ be the Karush-Kuhn-Tucker (KKT) points for  \eqref{eq:decomposition admm form 2} at iteration $k$, we have
\begin{equation}
\begin{aligned} \label{eq:kkt}
-A_i^T\bm{\lambda}^{k*}&=\triangledown h_i^k(\bm{x}_i^*) \\
A\bm{x}^*&=\bm{0}
\end{aligned}
\end{equation}
where $A_i$ indicates the columns of $A$ corresponding to agent $i$. It is worth noting that since $\sum\limits_{i=1}^{2N}h_i^{k}(\bm{x}_{i})$ is strongly convex, $\bm{x}^*$ is the optimal solution to  \eqref{eq:decomposition admm form 2}.

Since each $h_i^k$ is strongly convex, we have
\begin{equation}
\begin{aligned}
(\triangledown h_i^{k+1}(\bm{x}_i^{k+1})-\triangledown h_i^{k+1}(\bm{x}_i^*))^T(\bm{x}_i^{k+1}-\bm{x}_i^*)
\ge m_f\parallel\bm{x}_i^{k+1}-\bm{x}_i^*\parallel^2.
\end{aligned}
\end{equation}

Combining the above equation with \eqref{eq:subproblem} and \eqref{eq:kkt}, we have
\begin{equation}
\begin{aligned}
(-\sum_{j\in\mathcal{N}_i}(\bm{\lambda}_{i,j}^k+\rho(\bm{x}_i^{k+1}-\bm{x}_j^k))-\gamma_i\rho(\bm{x}_i^{k+1}-\bm{x}_i^k)
+A_i^T\bm{\lambda}^{k+1*})^T(\bm{x}_i^{k+1}-\bm{x}_i^*)\ge m_f\parallel\bm{x}_i^{k+1}-\bm{x}_i^*\parallel^2.
\end{aligned}
\end{equation}

Noting that $\bm{\lambda}_{i,j}^{k+1}=\bm{\lambda}_{i,j}^k+\rho(\bm{x}_i^{k+1}-\bm{x}_j^{k+1})$, one has
\begin{equation} \label{eq:1}
\begin{aligned}
(-\sum_{j\in\mathcal{N}_i}(\bm{\lambda}_{i,j}^{k+1}+\rho(\bm{x}_j^{k+1}-\bm{x}_j^k))-\gamma_i\rho(\bm{x}_i^{k+1}-\bm{x}_i^k)
+A_i^T\bm{\lambda}^{k+1*})^T(\bm{x}_i^{k+1}-\bm{x}_i^*)\ge m_f\parallel\bm{x}_i^{k+1}-\bm{x}_i^*\parallel^2.
\end{aligned}
\end{equation}

Based on the definition of $A$, $D$, and $U$, one can further have
\begin{eqnarray}
&&\sum\limits_{j\in\mathcal{N}_i}\bm{\lambda}_{i,j}^{k+1}=A_i^T\bm{\lambda}^{k+1},\\
&&\sum\limits_{j\in\mathcal{N}_i}(\bm{x}_j^{k+1}-\bm{x}_j^k)=(D-A^TA)_i^T(\bm{x}^{k+1}-\bm{x}^k),\\
&&\gamma_i\rho(\bm{x}_i^{k+1}-\bm{x}_i^k)=\rho U_i(\bm{x}^{k+1}-\bm{x}^k). 
\end{eqnarray}

Recall that $Q=D-A^TA+U$ and $Q=Q^T$, we can combine \eqref{eq:1} with the above three equations to obtain
\begin{equation}
\begin{aligned} \label{eq:2}
(-A_i^T(\bm{\lambda}^{k+1}-\bm{\lambda}^{k+1*})-\rho Q_i^T(\bm{x}^{k+1}-\bm{x}^k))^T
\bm{\cdot}(\bm{x}_i^{k+1}-\bm{x}_i^*)\ge m_f\parallel\bm{x}_i^{k+1}-\bm{x}_i^*\parallel^2.
\end{aligned}
\end{equation}

Summing both sides of \eqref{eq:2} over $i=1,2,\ldots,2N$ and using
\begin{equation}
\begin{aligned}	
\sum\limits_{i=1}^{2N}(\bm{x}_i^{k+1}-\bm{x}_i^{*})^T(A_i)^T(\bm{\lambda}^{k+1}-\bm{\lambda}^{k+1*})
=(\bm{x}^{k+1}-\bm{x}^{*})^TA^T(\bm{\lambda}^{k+1}-\bm{\lambda}^{k+1*}) \nonumber\\	
\sum\limits_{i=1}^{2N}(\bm{x}_i^{k+1}-\bm{x}_i^{*})^T\rho Q_i^T(\bm{x}^{k+1}-\bm{x}^k)
=\rho(\bm{x}^{k+1}-\bm{x}^{*})^TQ^T(\bm{x}^{k+1}-\bm{x}^k)
\end{aligned}
\end{equation}
we have
\begin{equation} \label{eq:3}
\begin{aligned}
-(\bm{x}^{k+1}-\bm{x}^{*})^TA^T(\bm{\lambda}^{k+1}-\bm{\lambda}^{k+1*})
-\rho(\bm{x}^{k+1}-\bm{x}^{*})^TQ^T(\bm{x}^{k+1}-\bm{x}^k)\ge m_f\parallel\bm{x}^{k+1}-\bm{x}^*\parallel^2
\end{aligned}
\end{equation}

Moreover,  the following equalities can be obtained by using algebraic manipulations:
\begin{equation}
\begin{aligned}
(\bm{x}^{k+1}-\bm{x}^{*})^TQ^T(\bm{x}^{k+1}-\bm{x}^{k})=\frac{1}{2}\parallel \bm{x}^{k+1}-\bm{x}^{k}\parallel_Q^2
+\frac{1}{2}(\parallel \bm{x}^{k+1}-\bm{x}^*\parallel_{Q}^2-\parallel \bm{x}^{k}-\bm{x}^*\parallel_{Q}^2),  \label{eq:4} 
\end{aligned}
\end{equation}
\begin{equation}
\begin{aligned}
(\bm{x}^{k+1}-\bm{x}^{*})^TA^T(\bm{\lambda}^{k+1}-\bm{\lambda}^{k+1*})
=\frac{1}{\rho}(\bm{\lambda}^{k+1}-\bm{\lambda}^k)^T(\bm{\lambda}^{k+1}-\bm{\lambda}^{k+1*})\\
\end{aligned}
\end{equation}
\begin{equation}
\begin{aligned}
\frac{1}{\rho}(\bm{\lambda}^{k+1}-\bm{\lambda}^k)^T(\bm{\lambda}^{k+1}-\bm{\lambda}^{k+1*})=\frac{1}{2\rho}\parallel \bm{\lambda}^{k+1}-\bm{\lambda}^{k}\parallel^2
-\frac{1}{2\rho}\parallel \bm{\lambda}^k-\bm{\lambda}^{k+1*}\parallel^2+ \frac{1}{2\rho}\parallel \bm{\lambda}^{k+1}-\bm{\lambda}^{k+1*}\parallel^2
\end{aligned}
\end{equation}

Based on the above three inequities, \eqref{eq:3} can be rewritten as
\begin{equation} \label{eq:5}
\begin{aligned}
m_f\parallel\bm{x}^{k+1}-\bm{x}^*\parallel^2\le&-\frac{\rho}{2}\parallel \bm{x}^{k+1}-\bm{x}^*\parallel_{Q}^2-
\frac{1}{2\rho}\parallel \bm{\lambda}^{k+1}-\bm{\lambda}^{k+1*}\parallel^2+\frac{\rho}{2}\parallel \bm{x}^{k}-\bm{x}^*\parallel_{Q}^2+\frac{1}{2\rho}\parallel \bm{\lambda}^k-\bm{\lambda}^{k+1*}\parallel^2 \\
&-\frac{\rho}{2}\parallel \bm{x}^{k+1}-\bm{x}^{k}\parallel_Q^2-\frac{1}{2\rho}\parallel \bm{\lambda}^{k+1}-\bm{\lambda}^{k}\parallel^2
\end{aligned}
\end{equation}

Recall that  $H={\rm diag}\{\rho Q, \frac{1}{\rho}I_{|E|n}\}$ and $\bm{y}^k=[\bm{x}^{kT},\bm{\lambda}^{kT}]^T$. The above inequality can be simplified as
\begin{equation} \label{eq:mf}
\begin{aligned}
2m_f\parallel\bm{x}^{k+1}-\bm{x}^*\parallel^2\le\parallel \bm{y}^{k}-\bm{y}^{k+1*}\parallel_{H}^2 
-\parallel \bm{y}^{k+1}-\bm{y}^{k+1*}\parallel_{H}^2
-\parallel \bm{y}^{k+1}-\bm{y}^{k}\parallel_H^2
\end{aligned}
\end{equation}       

On the other hand,  note that for any constant $u>1$,  the following relationship is true \cite{ling2014decentralized}
\begin{equation} \label{eq:6}
\begin{aligned}
(u-1)\parallel \bm{a}-\bm{b}\parallel^2\ge(1-\frac{1}{u})\parallel\bm{ b}\parallel^2-\parallel \bm{a}\parallel^2
\end{aligned}
\end{equation}

So we have
\begin{equation} \label{eq:h derivative}
\begin{aligned}
(u-1)\parallel \triangledown h^{k+1}(\bm{x}^{k+1})-\triangledown h^{k+1}(\bm{x}^*)\parallel^2 
&=(u-1)\parallel A^T(\bm{\lambda}^{k+1}-\bm{\lambda}^{k+1*})+\rho Q^T(\bm{x}^{k+1}-\bm{x}^k)\parallel^2 \\
&\ge\frac{u-1}{u}\parallel A^T(\bm{\lambda}^{k+1}-\bm{\lambda}^{k+1*})\parallel^2 -\parallel \rho Q^T(\bm{x}^{k+1}-\bm{x}^k)\parallel^2 
\end{aligned}
\end{equation}

Since $\bm{\lambda}^{k+1}$ and $\bm{\lambda}^{k+1*}$ lie in the column space of of $A$, we have \cite{ling2014decentralized}
\begin{equation} \label{eq:Amin}
\begin{aligned}
&\parallel A^T(\bm{\lambda}^{k+1}-\bm{\lambda}^{k+1*})\parallel^2\ge A_{\min} \parallel \bm{\lambda}^{k+1}-\bm{\lambda}^{k+1*}\parallel^2\\
&\parallel \rho Q^T(\bm{x}^{k+1}-\bm{x}^k)\parallel^2 \le \rho^2 Q_{\max}\parallel\bm{x}^{k+1}-\bm{x}^k\parallel_Q^2
\end{aligned}
\end{equation}
where $Q_{\max}$ is the largest eigenvalue of $Q$, $A_{\min}$ is the smallest nonzero eigenvalue of $A^TA$.

In addition, given that $\parallel \triangledown h^{k+1}(\bm{x}^{k+1})-\triangledown h^{k+1}(\bm{x}^*)\parallel^2\le L^2\parallel  \bm{x}^{k+1}- \bm{x}^*\parallel^2$ is true according to Assumption \ref{assumption gi}, using \eqref{eq:Amin} and \eqref{eq:h derivative}, we can obtain
\begin{equation} \label{eq: L}
\begin{aligned}
(u-1)L^2\parallel  \bm{x}^{k+1}- \bm{x}^*\parallel^2\ge\frac{(u-1)A_{\min}}{u}\parallel \bm{\lambda}^{k+1}-\bm{\lambda}^{k+1*}\parallel^2 
-\rho^2 Q_{\max}\parallel\bm{x}^{k+1}-\bm{x}^k\parallel_Q^2
\end{aligned}
\end{equation}

Using algebraic manipulations, the above inequality can be rewritten as
\begin{equation} \label{eq: L1}
\begin{aligned}
\frac{u Q_{\max}}{(u-1)A_{\min}}\rho \parallel\bm{x}^{k+1}-\bm{x}^k\parallel_Q^2+\frac{uL^2}{\rho A_{\min}}\parallel  \bm{x}^{k+1}- \bm{x}^*\parallel^2 
\ge\frac{1}{\rho}\parallel \bm{\lambda}^{k+1}-\bm{\lambda}^{k+1*}\parallel^2
\end{aligned}
\end{equation}

Adding $\frac{u Q_{\max}}{(u-1)A_{\min}}\frac{1}{\rho}\parallel \bm{\lambda}^{k+1}-\bm{\lambda}^{k}\parallel^2$ and $\rho Q_{\max} \parallel\bm{x}^{k+1}-\bm{x}^*\parallel^2$ to the left hand side of the above inequality, and adding $\rho \parallel\bm{x}^{k+1}-\bm{x}^*\parallel_Q^2$ to the right hand side, we obtain the following inequality based on the fact $\rho \parallel\bm{x}^{k+1}-\bm{x}^*\parallel_Q^2\le\rho Q_{\max} \parallel\bm{x}^{k+1}-\bm{x}^*\parallel^2$:
\begin{equation} \label{eq: L2}
\begin{aligned}
&\frac{u Q_{\max}}{(u-1)A_{\min}}(\rho \parallel\bm{x}^{k+1}-\bm{x}^k\parallel_Q^2+\frac{1}{\rho}\parallel \bm{\lambda}^{k+1}-\bm{\lambda}^{k}\parallel^2)
+(\frac{uL^2}{\rho A_{\min}}+\rho Q_{\max})\parallel  \bm{x}^{k+1}- \bm{x}^*\parallel^2 \\
&\ge\frac{1}{\rho}\parallel \bm{\lambda}^{k+1}-\bm{\lambda}^{k+1*}\parallel^2+\rho \parallel\bm{x}^{k+1}-\bm{x}^*\parallel_Q^2
\end{aligned}
\end{equation}

Let
\begin{equation}
\begin{aligned}
\delta=\min\{\frac{(u-1)A_{\min}}{u Q_{\max}}, \frac{2m_fA_{\min}\rho}{uL^2+\rho^2A_{\min}Q_{\max}}\}
\end{aligned}
\end{equation}

Inequality \eqref{eq: L2} becomes
\begin{equation} \label{eq: LL}
\begin{aligned}
\frac{1}{\delta} \parallel\bm{y}^{k+1}-\bm{y}^k\parallel_H^2
+\frac{2m_f}{\delta}\parallel  \bm{x}^{k+1}- \bm{x}^*\parallel^2 
\ge\parallel \bm{y}^{k+1}-\bm{y}^{k+1*}\parallel_H^2
\end{aligned}
\end{equation} 

Based on \eqref{eq:mf} and \eqref{eq: LL}, we can get
\begin{equation} \label{eq: final}
\begin{aligned}
\frac{1}{\delta} \parallel\bm{y}^{k}-\bm{y}^{k+1*}\parallel_H^2
-\frac{1}{\delta}\parallel  \bm{y}^{k+1}- \bm{y}^{k+1*}\parallel_H^2 
\ge\parallel \bm{y}^{k+1}-\bm{y}^{k+1*}\parallel_H^2
\end{aligned}
\end{equation} 
which proves Lemma \ref{lemma 1}.

\subsection{Proof of Lemma \ref{lemma 2}}
First, we have
\begin{equation}\label{eq:a}
\begin{aligned}
\parallel \bm{y}^{k}-\bm{y}^{k+1*}\parallel_H-	\parallel \bm{y}^{k}-\bm{y}^{k*}\parallel_H 
\le\parallel \bm{y}^{k+1*}-\bm{y}^{k*}\parallel_H
\end{aligned}
\end{equation}

On the other hand, we have   \begin{equation} 
\begin{aligned}
\parallel \bm{y}^{k+1*}-\bm{y}^{k*}\parallel_H =\frac{1}{\sqrt{\rho}}\parallel \bm{\lambda}^{k+1*}-\bm{\lambda}^{k*}\parallel
\end{aligned}
\end{equation} 
\begin{equation}
\begin{aligned}
\parallel A^T(\bm{\lambda}^{k+1*}-\bm{\lambda}^{k*})\parallel=\parallel\triangledown h^{k+1}(\bm{x}^*)-\triangledown h^k(\bm{x}^*) \parallel.
\end{aligned}
\end{equation} 

Therefore, we can get the following inequality using \eqref{eq:Amin}
\begin{equation} \label{eq: b}
\begin{aligned}
\parallel \bm{\lambda}^{k+1*}-\bm{\lambda}^{k*}\parallel\le\frac{1}{\sqrt{A_{\min}}}\parallel\triangledown h^{k+1}(\bm{x}^*)-\triangledown h^k(\bm{x}^*) \parallel.
\end{aligned}
\end{equation} 

Combing \eqref{eq:a} to  \eqref{eq: b}, we obtain
\begin{equation} 
\begin{aligned}
\parallel \bm{y}^{k}-\bm{y}^{k+1*}\parallel_H
\le\parallel \bm{y}^{k}-\bm{y}^{k*}\parallel_H+\frac{1}{\sqrt{\rho A_{\min}}}\parallel\triangledown h^{k+1}(\bm{x}^*)-\triangledown h^k(\bm{x}^*) \parallel,
\end{aligned}
\end{equation}
which completes the proof of Lemma \ref{lemma 2}.

\end{document}